\newcommand{\C}{\ensuremath{\mathbb{C}}}
\newcommand{\ra}{\ensuremath{\rightarrow}}
\newcommand{\lra}{\ensuremath{\longrightarrow}}
\newcommand{\D}{\ensuremath{\partial}}
\newcommand{\calo}{\ensuremath{\mathcal{O}}} 
\DeclareMathOperator{\Sing}{Sing}
\DeclareMathOperator{\Der}{Der}
\DeclareMathOperator{\depth}{depth}
\newcommand{\mc}[1]{\ensuremath{\mathcal{#1}}}   
\newcommand{\ms}[1]{\ensuremath{\mathscr{#1}}}   
\newcommand{\mf}[1]{\ensuremath{\mathfrak{#1}}}   
 \newcommand{\komm}[1]{}   
\long\def\ignore#1\recognize{}    
\theoremstyle{theorem}
\newtheorem{theorem}{Theorem}        
\newtheorem{lemma}{Lemma}
\newtheorem{corollary}{Corollary}
\newtheorem{proposition}{Proposition}   
\newtheorem{Qu}{Question}
\newtheorem{Conj}{Conjecture}
\newtheorem*{thma}{Theorem A}
\newtheorem*{thmb}{Proposition B}
\newtheorem*{thmc}{Proposition C}
\theoremstyle{remark}
\newtheorem{remark}{Remark}
\newtheorem{example}{Example}
\theoremstyle{definition}
\newtheorem{definition}{Definition} 
\title{Characterizing normal crossing hypersurfaces}
\author{Eleonore Faber}
\address{
Department of Computer and Mathematical Sciences,
University of Toronto at Scarborough,
Toronto, ON M1A 1C4,
Canada
}
\email{efaber@math.toronto.edu}
\thanks{
\noindent The author has been supported by a For Women in Science award 2011 of L'Or{\'e}al Austria, the Austrian commission for UNESCO and the Austrian Academy of Sciences and by
the Austrian Science Fund (FWF)
in frame of projects J3326 and P21461. \\
2010 Mathematics Subject Classification: 32S25, 32S10, 32A27, 14B04 \\
{\it Keywords}: normal crossing divisor, free divisor, logarithmic derivations, logarithmic differential forms, Jacobian ideal, residue} 
\begin{document}

\begin{abstract}
The objective of this article is to give an effective algebraic characterization of normal crossing hypersurfaces in complex manifolds. It is shown that a divisor (=hypersurface) has normal crossings
if and only if it is a free divisor, has a radical Jacobian ideal and a smooth normalization. Using K.~Saito's theory of free divisors, also a characterization in terms of logarithmic differential
forms and vector fields is found. Finally, we give another description of a normal crossing divisor in terms of the logarithmic residue using recent results of M.~Granger and M.~Schulze.
\end{abstract}

\maketitle

 \tableofcontents

\section{Introduction}

Consider a divisor (=hypersurface) $D$ in a complex manifold $S$ of dimension $n$. Then $D$ is said to have \emph{normal crossings} at a point $p$ if locally at $p$ there exist complex coordinates $(x_1, \ldots, x_n)$ such that
$D$ is defined by
the equation $x_1 \cdots x_m=0$ for some $0 \leq m \leq n$. In general there is no algorithm to find these coordinates. Hence the question arises: is there an \emph{effective algebraic}
characterization of a divisor with normal
crossings? \\

Normal crossing divisors appear in many contexts in algebraic and analytic geometry, for example in the embedded resolution of singularities \cite{Hi}, in compactification problems \cite{FM,DP} or in cohomology
computations \cite{Deligne71}.
However, given an (algebraic or analytic) variety, it is not clear how to determine effectively if this variety has normal crossings:
only in case the decomposition into irreducible components is known, the normal crossings property can be checked rather easily (see e.g. \cite{Bodnar04}). 
\\

The main goal of this article is to derive an effective algebraic criterion for a normal crossing divisor in a complex manifold.
By ``effective'' is meant that one should be able to decide from data derived from a local defining equation of the divisor whether it has normal crossings at a point. The guiding
idea for our investigations is that the singular locus, that is given by the Tjurina algebra, carries all information about the geometric properties of a
divisor. Here we were inspired by work of Mather--Yau about isolated hypersurface singularities \cite{MY} and Gaffney--Hauser in a more general setting \cite{GH}. 
On the other hand the tangent behaviour along the divisor, via so-called logarithmic vector fields, will give us means to control the normal crossings property. Here the rich theory of logarithmic
vector fields (differential forms), initiated by K.~Saito in the 1980's \cite{Saito80}, will be the other main ingredient for an algebraic criterion characterizing normal crossing divisors. Saito
introduced the notion of free divisor (a certain generalization of normal crossing divisor), which appears in different areas:  for example in deformation theory as discriminants \cite{Saito81,Looijenga84,Aleksandrov86,MondvanStraten01,Buchweitz06}, in combinatorics as free hyperplane arrangements \cite{Terao80,OrlikTerao92}, related to prehomogeneous vector spaces \cite{BuchweitzMond,GrangerMondSchulze11}  or in connection with the logarithmic comparison problem 
\cite{CNM96,CN09}. \\
Since a normal crossing divisor is in particular free, one is led to impose additional conditions on free divisors in order to single out the ones with normal crossings. It turns out that the radicality of the Jacobian ideal is the right property.\\

The main result is:
\begin{thma}[Thm.~\ref{Thm:radikalJacobi}]
A divisor in a complex manifold has normal crossings at a point if and only if it is free with radical Jacobian ideal at that point and its normalization is smooth.
\end{thma}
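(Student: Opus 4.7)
The plan is to handle the two directions separately, with essentially all the work in the ``if'' direction. The ``only if'' direction is a direct verification: given local coordinates $\xin$ in which $f = x_1 \cdots x_m$, I exhibit the Saito basis $\{x_i \D_{x_i}\}_{i \leq m} \cup \{\D_{x_j}\}_{j > m}$ of $\Der(-\log D)$ (whose determinant is $f$, confirming freeness via Saito's criterion); compute the Jacobian ideal as $\bigcap_{1 \leq i < j \leq m}(x_i, x_j)$, which is visibly radical; and observe that the normalization is the disjoint union of the coordinate hyperplanes $\{x_i = 0\}$, each of which is smooth. This direction I would dispose of first, as a warm-up that also fixes normalizations.

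For the ``if'' direction, assume $(D, p)$ is free, has radical Jacobian ideal at $p$, and has smooth normalization. The plan proceeds in three stages. First, decompose the germ $(D, p) = D_1 \cup \cdots \cup D_k$ into analytic irreducible components with reduced defining equations $f_i \in \calo_{S,p}$, so that $f = u \cdot f_1 \cdots f_k$ for some unit $u$. The first substantive claim is that each $D_i$ is smooth. Smoothness of the normalization alone does not guarantee this --- the cuspidal curve $y^2 = x^3$ has smooth normalization but is not smooth --- so the radicality of $J_f$ must be invoked here, presumably to exclude cusp-like singularities by producing nilpotents in $\calo_{S,p}/J_f$ whenever a component fails to be smooth.

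Second, and most importantly, I need to show that the smooth components $D_1, \ldots, D_k$ meet transversally at $p$, i.e.\ that $df_1(p), \ldots, df_k(p)$ are linearly independent in the cotangent space. This is the main obstacle. My proposed strategy is contrapositive: suppose $D_1, D_2$ are tangent at $p$, so in a suitable coordinate system $df_1(p)$ and $df_2(p)$ are proportional. Expanding $\D_j f = \sum_i (\D_j f_i)\prod_{l\neq i} f_l$ and combining with Saito's criterion (the determinant of a basis of $\Der(-\log D)$ equals $f$ up to a unit), I aim to produce an element $g \in \calo_{S,p}$ with $g^2 \in J_f$ but $g \notin J_f$, contradicting radicality. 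The delicate part is that the tangency argument may need refinement when three or more components are simultaneously tangent; I would handle this by induction on $k$, exploiting that the hypotheses should be inherited by suitable subdivisors $D_{i_1} \cup \cdots \cup D_{i_j}$, and possibly using the known fact that $\Sing(D)$ for a free divisor is Cohen--Macaulay of pure codimension two in $S$ to constrain how components can meet.

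Once transversality is established, the proof concludes swiftly: the inverse function theorem allows one to extend the $f_i$ to a holomorphic coordinate system $x_1, \ldots, x_n$ at $p$, and then $f = u \cdot x_1 \cdots x_k$ is manifestly in normal crossing form. I expect the heart of the paper to be the combinatorial/algebraic analysis in the second stage, where freeness (through Saito's criterion) and radicality of $J_f$ must be played off against each other to force the linear independence of the cotangent vectors $df_i(p)$.
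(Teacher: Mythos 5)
The ``only if'' direction is fine and matches the paper. In the ``if'' direction, however, both of your substantive stages have genuine gaps, and the first one is not merely incomplete but rests on a mechanism that cannot work. You claim that radicality of $J_f$ should force each irreducible component $D_i$ to be smooth ``by producing nilpotents in $\calo_{S,p}/J_f$ whenever a component fails to be smooth.'' Two problems: (a) you first need to know that radicality and freeness of $D$ descend to each component $D_i$ (i.e.\ that $J_{f_i}$ is radical and $D_i$ is free); this is itself a nontrivial theorem (the splayedness result, Prop.~\ref{Prop:produktfreireduktion}, proved in \cite{Faber12}), not something that follows from expanding $\partial_j f$; and (b) even granting it, a radical Jacobian ideal does \emph{not} force an irreducible hypersurface to be smooth: the cone $z^2=xy$ has Jacobian ideal $(x,y,z)$, which is radical, yet the cone is singular. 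In the paper the smoothness of the components is obtained by a completely different route: radicality plus freeness identifies $J_{h_i}$ with the conductor $C_{D_i}$ (Lemma~\ref{Lem:Jacobiconductor}), and then Piene's theorem, applied to the normalization $\pi_i$ --- which is a \emph{finite desingularization precisely because $\widetilde D$ is assumed smooth}, and this is the only place that hypothesis enters --- gives $C_{D_i}I_{\pi_i}\calo_{\widetilde D_i}=J_{h_i}\calo_{\widetilde D_i}$, hence $I_{\pi_i}=\calo_{\widetilde D_i}$, so $\pi_i$ is unramified, hence an isomorphism (Lemma~\ref{Lem:unramifiedisomorphism}); thus $D_i$ is normal, and a normal free divisor is smooth. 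Your sketch never uses the smooth-normalization hypothesis at all; since the paper explicitly records as open whether that hypothesis can be dropped (Conjecture~\ref{Conj:gorensteinnormal} and the Question in Section~\ref{Sub:radicalJacobiallg}), an argument for this stage that does not invoke it cannot be correct as stated.

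The second stage (transversality of the smooth components) is likewise only a statement of intent: you say you ``aim to produce'' a $g$ with $g^2\in J_f$, $g\notin J_f$ from a tangency, and you acknowledge the multi-component case is delicate, but no construction is given. This is exactly where the real work lies, and the paper does it differently: radicality of $J_f$ implies $D$ is splayed, and induction on the splayed decomposition (Cor.~\ref{Cor:smoothcomp}, again resting on \cite{Faber12}) yields normal crossings once the components are smooth. So while your overall skeleton (components smooth, then transversal, then conclude by the implicit function theorem) agrees with the paper's, the two pivotal steps --- descent of radicality/freeness to components together with the Piene/conductor argument, and the splayedness mechanism replacing your $g^2\in J_f$ idea --- are missing from the proposal.
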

Since there is an interpretation of free divisors by
their Jacobian ideals (due to Aleksandrov \cite{Aleksandrov90}, also see \cite{Terao83,Simis06}), one thus obtains a purely algebraic characterization of normal crossing divisors.  The proof uses a
Theorem of R.~Piene about ideals in desingularizations \cite{Piene79} and also results of Granger and Schulze about the dual logarithmic residue, see \cite{GrangerSchulze11}.  The condition on the normalization is
technical and we do not know if it is necessary in general: we show that in some special cases (Gorenstein singular locus, normal irreducible components) no additional properties of the normalization
have to be required. \\

Moreover,  two other characterizations of normal crossing
divisors in terms of logarithmic differential forms (resp. vector fields) and the logarithmic residue are shown.
\begin{thmb}[Prop.~\ref{Thm:ncequivclosedforms}]
A divisor $D$ in a complex manifold $S$ has normal crossings at a point $p$ if and only if $\Omega^1_{S,p}(\log D)$, its module of logarithmic 1-forms, is free and has a basis of closed forms. This
is also equivalent to the condition that $\Der_{S,p}(\log D)$, its module of logarithmic derivations, is free and has a basis of commuting vector fields.
\end{thmb}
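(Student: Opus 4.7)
The plan is to structure the proof around three pieces: the easy direction, the equivalence between the logarithmic $1$-form and vector field formulations, and the hard direction.

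For the easy direction, assume $D$ has normal crossings at $p$, choose local coordinates $(x_1,\ldots,x_n)$ with $D = \{x_1\cdots x_m = 0\}$, and exhibit the standard bases. The $1$-forms $\tfrac{dx_1}{x_1},\ldots,\tfrac{dx_m}{x_m},dx_{m+1},\ldots,dx_n$ are all closed and form a free basis of $\Omega^1_{S,p}(\log D)$ by a direct check against Saito's criterion. Dually, the vector fields $x_1\D_{x_1},\ldots,x_m\D_{x_m},\D_{x_{m+1}},\ldots,\D_{x_n}$ pairwise commute and form a free basis of $\Der_{S,p}(\log D)$.

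For the equivalence of the two conditions, invoke Saito's duality: once $\Der_{S,p}(\log D)$ is free, $\Omega^1_{S,p}(\log D) \cong \Hom_{\calo_{S,p}}(\Der_{S,p}(\log D),\calo_{S,p})$, so a free basis $\omega_1,\ldots,\omega_n$ of closed logarithmic $1$-forms has a well-defined dual basis $\delta_1,\ldots,\delta_n$ of $\Der_{S,p}(\log D)$, characterized by $\omega_i(\delta_j)=\delta_{ij}$. The Cartan formula
\[
d\omega_i(\delta_j,\delta_k) \;=\; \delta_j(\omega_i(\delta_k)) - \delta_k(\omega_i(\delta_j)) - \omega_i([\delta_j,\delta_k]) \;=\; -\omega_i([\delta_j,\delta_k])
\]
then shows that all $d\omega_i$ vanish on all pairs of basis derivations if and only if all brackets $[\delta_j,\delta_k]$ lie in the common kernel of the $\omega_i$. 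Since $[\delta_j,\delta_k]\in\Der_{S,p}(\log D)$ and the $\omega_i$ form a basis of the dual module, this kernel is zero; hence closedness of the $\omega_i$ is equivalent to commutativity of the $\delta_j$. One also needs to observe that closedness of a basis of $\Omega^1_{S,p}(\log D)$ does not depend on the choice of basis (changing basis by an invertible matrix over $\calo_{S,p}$ affects closedness in a controlled way and can be absorbed), and similarly for commutativity.

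The hard direction is to produce normal crossing coordinates from a commuting free basis $\delta_1,\ldots,\delta_n$ of $\Der_{S,p}(\log D)$. My plan here is to combine simultaneous rectification with Theorem A. First, by Saito's criterion, the determinant of the coefficient matrix of $(\delta_i)$ against $(\D_{x_j})$ equals a defining equation of $D$ up to a unit; in particular, at a general point $q \notin D$, the $\delta_i$ are linearly independent and commute, so by the classical simultaneous straightening theorem for commuting vector fields they become $\D_{y_1},\ldots,\D_{y_n}$ in suitable coordinates. To handle $p\in \Sing(D)$, after a linear change among the $\delta_i$ I would split off those that are non-vanishing at $p$, straighten them to $\D_{y_{m+1}},\ldots,\D_{y_n}$, and observe that the remaining commuting $\delta_1,\ldots,\delta_m$ (all vanishing at $p$) are then independent of $y_{m+1},\ldots,y_n$ and depend only on the transverse coordinates. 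The linear parts at $p$ form a commuting family of endomorphisms of the transverse tangent space and preserve the tangent cone of $D$; I would argue that this forces them to be simultaneously diagonalizable with the correct eigenstructure, then invoke a convergent simultaneous linearization to bring $\delta_i$ to $y_i\D_{y_i}$. Alternatively, and probably more efficiently given what is available in the paper, one can reduce to Theorem A by showing that a commuting free basis forces the Jacobian ideal to be radical and the normalization of $D$ to be smooth; the smoothness of the normalization follows because the commuting flows foliate the complement of $\Sing(D)$ into a torus-like structure transverse to each smooth branch.

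The main obstacle is precisely this last step: pushing past the singular locus to extract normal-crossings coordinates from the commuting basis. Generic straightening is classical, but at $p$ where several $\delta_i$'s vanish simultaneously one must either prove a simultaneous linearization theorem or, using logarithmic residue machinery and the hypothesis that $\Der_{S,p}(\log D)$ is free, verify the hypotheses of Theorem A directly. I expect the cleanest route to be the latter, relying on the Granger--Schulze residue results cited in the introduction.
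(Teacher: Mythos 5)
Your easy direction and your equivalence between the closed-basis condition on $\Omega^1_{S,p}(\log D)$ and the commuting-basis condition on $\Der_{S,p}(\log D)$ are essentially the paper's own argument (standard bases in normal crossing coordinates; dual bases plus Cartan's formula $d\omega_i(\delta_j,\delta_k)=\delta_j(\omega_i(\delta_k))-\delta_k(\omega_i(\delta_j))-\omega_i([\delta_j,\delta_k])$). One parenthetical remark is wrong, though harmless since you never use it: closedness of a basis is \emph{not} preserved under an arbitrary invertible $\calo_{S,p}$-linear change of basis (the differentials of the matrix entries enter), and only the existence statement transfers through duality --- which is all the equivalence requires.

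The genuine gap is the hard direction, which you yourself flag as the obstacle, and neither of your two proposed routes closes it. Route (a) needs a convergent \emph{simultaneous linearization} of the commuting logarithmic fields that vanish at $p$: you give no reason why their linear parts are semisimple, why they are simultaneously diagonalizable ``with the correct eigenstructure'', nor why a convergent normal form to $y_i\D_{y_i}$ exists (resonances and non-semisimple linear parts are exactly the issues such theorems must exclude); this is the entire content of the statement, not a step one can invoke. Route (b), reduction to Theorem~\ref{Thm:radikalJacobi}, requires proving from the commuting (equivalently closed) basis that $J_h$ is radical and that $\widetilde D$ is smooth; no argument is offered for radicality, and ``the commuting flows foliate the complement of $\Sing D$ into a torus-like structure'' is not a proof of smoothness of the normalization. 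The paper takes a different and much more economical path, staying entirely on the form side: for a \emph{closed} logarithmic $1$-form the residue along each branch is a constant, so each basis element can be written as $\sum_i c_i\,dh_i/h_i + df$ with $c_i\in\C$ (Lemma~\ref{Lem:closed}, using the Poincar\'e lemma to absorb the exact part into unit multiples $h_i'=u_ih_i$); a counting argument via Saito's criterion shows $m\le n$ and that the $m\times m$ matrix of residue constants is invertible (Lemma~\ref{Lem:closedholomorphic}, Prop.~\ref{Prop:closednc}), so after a constant change of basis the closed basis is $dh_1'/h_1',\ldots,dh_m'/h_m',df_{m+1},\ldots,df_n$; finally Saito's criterion says the Jacobian determinant of $(h_1',\ldots,h_m',f_{m+1},\ldots,f_n)$ is a unit, so by the implicit function theorem these are coordinates and $D=\{x_1\cdots x_m=0\}$ (Lemma~\ref{Lem:koordinatenwechselnc}). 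Without an argument of this kind (or a genuine linearization theorem), your proposal does not establish the implication from a closed/commuting basis to normal crossings.
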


This result is based on Saito's theory of logarithmic differential forms. Here already the so-called logarithmic residue is used, which was also introduced by Saito and further studied by Aleksandrov
\cite{Aleksandrov05} and Aleksandrov--Tsikh \cite{AleksandrovTsikh01}. The above proposition follows from a slight modification of a Theorem of Saito (see Thm.~\ref{Thm:closednc}). 

\begin{thmc}[Prop.~\ref{Thm:irreduzibel}]
A divisor in a complex manifold has normal crossings if and only if it is free, has smooth normalization and the residues of its logarithmic 1-forms are holomorphic on the normalization.
\end{thmc}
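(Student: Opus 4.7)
The strategy is to deduce Proposition C from Theorem A by showing that, given freeness of $D$ and smoothness of its normalization $\tilde D$, holomorphicity of the logarithmic residues is equivalent to radicality of the Jacobian ideal $J_D$. Proposition C then follows by combining this equivalence with the characterization of normal crossings in Theorem A.

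The $(\Rightarrow)$ direction is a direct computation. Assuming $D$ has normal crossings at $p$, there exist coordinates in which $D = \{x_1 \cdots x_m = 0\}$. Freeness and the smoothness of $\tilde D = \coprod_{i=1}^{m} \{x_i = 0\}$ are immediate, and Proposition B supplies the closed basis $dx_1/x_1, \ldots, dx_m/x_m, dx_{m+1}, \ldots, dx_n$ of $\Omega^1_{S,p}(\log D)$. Saito's definition of the logarithmic residue gives $\mathrm{res}(dx_i/x_i) = 1$ on the sheet of $\tilde D$ over $\{x_i=0\}$ and $0$ on every other sheet, while $\mathrm{res}(dx_j) = 0$ for $j > m$. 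Hence the residues of a generating set, and thus all residues, lie in $\mc{O}_{\tilde D}$.

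For $(\Leftarrow)$, assume $D$ is free, $\tilde D$ is smooth, and $\mc{R}_D \subseteq \mc{O}_{\tilde D}$ (so $\mc{R}_D = \mc{O}_{\tilde D}$, using the always-valid reverse inclusion for free divisors). By Theorem A it suffices to prove that $J_D$ is radical, and for this I would appeal to the Granger--Schulze duality \cite{GrangerSchulze11}: the dual logarithmic residue identifies the quotient $\mc{R}_D/\mc{O}_{\tilde D}$ on the normalization with a module that precisely measures the failure $\sqrt{J_D}/J_D$ of radicality of the Jacobian on $D$. Under freeness together with smoothness of $\tilde D$ this identification is sharp enough that $\mc{R}_D = \mc{O}_{\tilde D}$ forces $J_D = \sqrt{J_D}$, and then Theorem A yields normal crossings.

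The main obstacle is this last translation. One has to check that the hypothesis ``every logarithmic residue is holomorphic'' amounts to the full equality of sheaves $\mc{R}_D = \mc{O}_{\tilde D}$ on the normalization, and then apply the Granger--Schulze duality in a form tight enough that no information is lost in passing from $\mc{R}_D/\mc{O}_{\tilde D}$ to $\sqrt{J_D}/J_D$. The smoothness of $\tilde D$ is essential for this step, since it is precisely this hypothesis that allows $\mc{R}_D$ to be handled as an honest fractional ideal on a regular scheme and makes the duality with $J_D$ clean; without it the link between residues and the Jacobian becomes substantially weaker. Once the equivalence is established, Theorem A closes the argument.
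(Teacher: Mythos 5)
Your forward direction is fine, but the key step of the converse rests on a duality statement that does not exist in the form you need. What the Granger--Schulze dual residue actually gives (and what Prop.~\ref{Prop:resconductor} records) is $\rho(\Omega^1_{S,p}(\log D))=\widetilde J_h^{\vee}$, so that for a \emph{free} $D$ (where $\widetilde J_h$ is maximal Cohen--Macaulay, hence reflexive on the hypersurface $D$) the equality $\rho(\Omega^1_{S,p}(\log D))=\pi_*\mc{O}_{\widetilde D,p}$ dualizes to $\widetilde J_h=C_{D,p}$, the \emph{conductor} --- not to radicality of $J_h$. The module $\rho(\Omega^1_{S,p}(\log D))/\pi_*\mc{O}_{\widetilde D,p}$ is paired against $C_{D,p}/\widetilde J_h$, not against $\sqrt{J_h}/J_h$, and conductors are in general not radical (e.g.\ for $x^3=y^4$ the conductor of the smooth normalization is $t^6\C\{t\}\cap\mc{O}_D$, whose radical is the maximal ideal). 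So ``residues weakly holomorphic $\Rightarrow J_h=\sqrt{J_h}$'' is exactly the statement you would still have to prove, and the cited duality does not deliver it; Lemma~\ref{Lem:Jacobiconductor} gives only the implication radical $\Rightarrow J_h=C_{D,p}$, not its converse. (The equivalence you assert is \emph{true a posteriori}, since both Theorem A and Proposition C characterize normal crossings under freeness plus smooth normalization, but that cannot be used to prove either one.)

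The paper's route avoids radicality altogether: from Prop.~\ref{Prop:resconductor} one gets $\widetilde J_h=C_{D,p}$; Piene's theorem (Thm.~\ref{Thm:Piene}) --- and this, not any reflexivity issue, is where smoothness of $\widetilde D$ is really used --- combined with Nakayama then forces the ramification ideal to be trivial, so $\Omega^1_{\widetilde D/D}=0$, i.e.\ $\pi$ is unramified; localizing to each irreducible component and applying Lemma~\ref{Lem:unramifiedisomorphism} shows every $D_i$ is normal; finally Lemma~\ref{Lem:normalsmooth} (resting on Saito's Theorem~\ref{Thm:closednc} and Proposition B) concludes that the $D_i$ are smooth and $D$ has normal crossings, exactly as in the proof of Theorem~\ref{Thm:radikalJacobi}. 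If you want to keep your plan of reducing to Theorem A, you would have to show that under your hypotheses the conductor is radical, which in effect already requires knowing the components are normal --- i.e.\ redoing the argument above.
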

The final result
makes use of the dual logarithmic residue, introduced by Granger and Schulze \cite{GrangerSchulze11}. \\

The article contains the following: in section \ref{Sec:singularities} singularities of normal crossing divisors are studied in order to prove Theorem A. Free divisors are introduced via the Jacobian
ideal characterization due to Aleksandrov. First it is shown that a curve in a complex manifold $S$ of dimension 2 has normal crossings at a point $p$  if and only if its Jacobian ideal is radical of
depth 2 in $\calo_{S,p}$ (Prop.~\ref{Prop:radikaldim2}). This is generalized in Prop.~\ref{Prop:Gorensteinsingular} to the case where $D$ is a divisor in a complex manifold $S$ of dimension $n$ having
reduced Gorenstein singular locus. Here we show that $\calo_{\Sing D,p}$ is a reduced  Gorenstein ring of dimension $n-2$  if and only if $D$ has normal crossings and $(\Sing D,p)$ is smooth.  Then
the
general case of the theorem is proven (using results by R.~Piene, M.~Granger and M.~Schulze and from \cite{Faber12}). In section \ref{Sub:radicalJacobiallg} radical Jacobian ideals are investigated,
in particular we pose the question, which radical ideals can be Jacobian ideals. Our main result in this direction is that if a divisor $D$ has a reduced singular locus of codimension $k$ which is
also a complete intersection, then $D$ is analytically trivial along its singular locus and isomorphic to a $k$-dimensional $A_1$-singularity (Prop.~\ref{Prop:jacobivollstdsallg}). Here, after
computation of examples, further questions and conjectures are raised. \\
In the next section, Saito's theory of logarithmic differential forms and vector fields is briefly recalled, in particular the notion of logarithmic residue. This is used to prove Proposition B. \\
In the last section we recall the dual logarithmic residue and the results of Granger and Schulze which lead to Proposition C. It is also shown that a divisor with normal irreducible components is
free and has weakly holomorphic logarithmic residue if and only if it has normal crossings, without any condition on the normalization (Lemma  \ref{Lem:normalsmooth}). Furthermore, some results on
divisors with weakly holomorphic residue are collected. We close this section with a few comments on divisors with normal crossings in codimension 1. \\
The results in this article form part of the author's Ph.D. thesis at the University of Vienna.

\section{Singularities of normal crossing divisors}  \label{Sec:singularities}

We work in the complex analytic category. The main objects of our study are divisors (=hypersurfaces) in  complex manifolds. We write $(S,D)$ for a fixed divisor $D$ in an $n$-dimensional complex
manifold $S$.
Denote by $p$ a point in $S$ and by $x=(x_1, \ldots, x_n)$ the complex coordinates of $S$ around $p$. The divisor $(D,p)$ will then be defined locally by an equation $\{
h_p(x_1, \ldots, x_n)=0 \}$ where $h_p \in \mc{O}_{S,p}$ (if the context is clear we omit the subscript $_p$). Note that we will always assume that $h$ is reduced! 
The divisor $D$ has \emph{normal crossings} at a point $p$ if one can find complex coordinates $(x_1, \ldots, x_n)$ at $p$ such that the defining equation $h$ of $D$ is $h=x_1 \cdots x_k$ for $k
\leq n$. We also say that $(D,p)$ is a \emph{normal crossing divisor}. 
The \emph{Jacobian ideal} of $h$ is denoted by $J_{h}=(\D_{x_1} h, \ldots, \D_{x_n}h) \subseteq \mc{O}_{S,p}$. 
The image of $J_h$ under the  canonical epimorphism that sends $\mc{O}_{S,p}$ to $\mc{O}_{D,p}=\mc{O}_{S,p}/(h)$ is denoted by $\widetilde{J_h}$. 
The associated analytic coherent ideal sheaves are
denoted by $\mc{J} \subseteq \mc{O}_{S}$ and $\widetilde{\mc{J}}$ in $\mc{O}_{D}$.   
The \emph{singular locus} of $D$ is denoted by $\Sing D$ and is defined by the ideal
sheaf $\widetilde{\mc{J}} \subseteq  \mc{O}_{D}$. The local ring of $\Sing D$ at a point $p$ is denoted by 
$$\mc{O}_{\Sing D,p}=\mc{O}_{S,p}/((h)+ J_h)=\mc{O}_{D,p}/\widetilde{J_h} . $$
Sometimes $\mc{O}_{\Sing D,p}=\C\{x_1, \ldots, x_n\}/(h,\D_{x_1}h, \ldots, \D_{x_n}h)$ is also called the
\emph{Tjurina algebra}. Note that we \emph{always} consider $\Sing D$ with the (possibly non-reduced) structure given by the Jacobian ideal of $D$. Hence in general $(\Sing
D,p)$ is a complex space germ and not necessarily reduced. We
often say that $\Sing D$ is \emph{Cohen--Macaulay}, which means that $\mc{O}_{\Sing D,p}$ is Cohen--Macaulay for all points $p \in \Sing D$. 
For facts about local analytic geometry we refer to \cite{dJP,Narasimhan}, about commutative algebra to \cite{Matsumura86}.

\begin{definition}
Let $D$ be a divisor in a complex manifold $S$ of dimension $n$ that is locally at a point $p$ given by $h=0$. Then $D$ is called \emph{free} at $p$ if either $D$ is smooth at $p$ or $\calo_{\Sing D,p}$ is a Cohen--Macaulay ring of dimension $n-2$.  The divisor $D$ is a \emph{free divisor} if it is free at each $p \in S$.
\end{definition}

\begin{remark}
Usually, free divisors are defined via logarithmic derivations, see section \ref{Sec:logdiff}. The equivalence of the two characterizations was proven by A.~G.~Aleksandrov in
\cite{Aleksandrov90}. 
\end{remark}

Hence free divisors are either smooth or non-normal. It is easy to see that normal crossing divisors are free (see proof of Theorem \ref{Thm:radikalJacobi}). Thus one has to impose an additional condition on the Jacobian ideal to ensure that a given free divisor really has normal crossings. In order to get an idea of the right property, look at some examples.

\begin{example} (1) Let $D$ be the cone in $\C^3$, given
by the equation $z^2=xy$. It does not have normal crossings at the origin
but the Jacobian ideal $J_{h,0}=(z,x,y)$ is clearly radical and
$\mc{O}_{\C^3,0}/(x,y,z) \cong \C$ is Cohen--Macaulay. However, the depth of
$\mc{O}_{\C^3,0}/J_{h,0}$ is $0$ and thus too small. \\
(2)  Let $S=\C^3$ and $D$ be the ``4-lines'' defined by $h=xy(x+y)(x+yz)$, see fig.~\ref{Fig:freediv} and cf. \cite{CN02,Narvaez08}. This divisor $D$ is
free.
Its Jacobian ideal is the intersection of the three primary ideals $(x+y,z-1)$,  $(x,z)$ and 
$(y^4,2xy^2z+y^3z+3x^2y+2xy^2,4x^2yz-3y^3z+2x^3-5x^2y-6xy^2)$ and  is not radical (the radical $\sqrt{J_h}$ is $(x+y,z-1) \cap (x,z) \cap (x,y)$). Also
$D$ does not have normal crossings at the origin. \\
(3) The Hessian deformation of an $E_8$ curve (see \cite{Damon02}, for a different interpretation see \cite{Sekiguchi08}) is a divisor in $\C^3$ defined by $h=y^5+z^3+xy^3z$. It does not have normal
crossings at $0$: it is irreducible and free at $0$ but $J_h$ is not radical. The reduced Jacobian ideal is $(y,z)$, the $x$-axis, cf.~fig.~\ref{Fig:freediv}.
\begin{figure}[!h]
\begin{tabular}{c@{\hspace{1.5cm}}c}
\includegraphics[width=0.39 \textwidth]{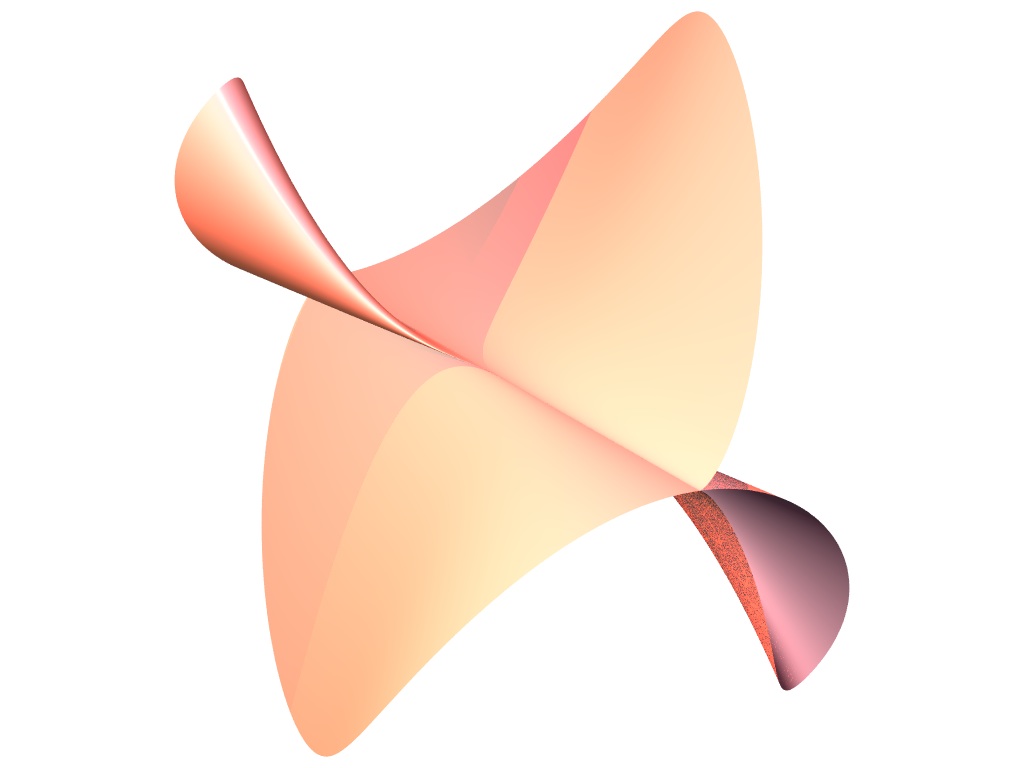}& 
\includegraphics[width=0.39 \textwidth]{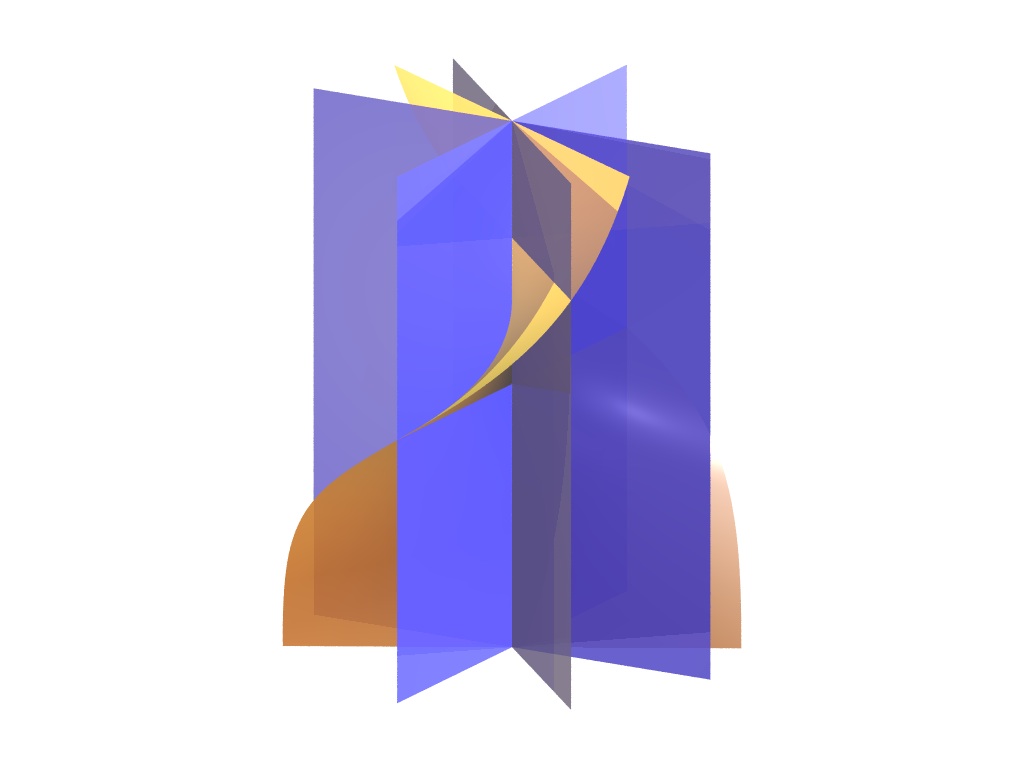}
\end{tabular}
\caption{ \label{Fig:freediv} Hessian deformation of $y^5+z^3$ (left)  and  The 4-lines (right).}
\end{figure}

\end{example}

So the right additional
requirement turns out to be radicality of the Jacobian ideal. Thus a purely algebraic criterion is obtained, which allows to determine whether a divisor has normal crossings at a point $p$, even
without knowing its decomposition into irreducible components.

\begin{theorem} \label{Thm:radikalJacobi}
Let $D$ be a divisor in a complex manifold $S$, $\dim S=n$.  Denote by $\pi: \widetilde D \ra D$ the normalization of $D$. Then the following are
equivalent: \\
(1) $D$ has normal crossings at a point $p$ in $D$. \\
(2) $D=\{h=0\}$ is free at $p$, the Jacobian ideal $J_{h,p}$ is radical and $(\widetilde D, \pi^{-1}(p))$ is smooth.
\end{theorem}

\begin{remark}
Condition (2) of the above theorem can also be phrased as: \\
(2') At $p \in D$ the Milnor algebra $\mc{O}_{\Sing D,p}$ is reduced and  either $0$ or Cohen--Macaulay of dimension $n-2$ and $\pi_*\mc{O}_{\widetilde D,p}$ is a regular ring. \\
Another equivalent formulation is: \\
(2'') At  $p \in D$, where $D=\{ h=0\}$, the Jacobian ideal $J_h$ is either equal to
$\mc{O}_{S,p}$ or it is radical, perfect with $\depth(J_h,\mc{O}_{S,p})=2$ and $\pi_*\mc{O}_{\widetilde D,p}$ is regular. 
\end{remark}

\begin{remark}
The condition $\widetilde D$ smooth is technical and only needed to apply Piene's Theorem (Thm.~\ref{Thm:Piene}) in our proof of Thm.~\ref{Thm:radikalJacobi}. In some special cases (see section
\ref{Sub:Special} and also Corollary \ref{Cor:ncomponentsweakly} and Lemma \ref{Lem:normalsmooth}) it can
be omitted. We do not know if this condition is necessary in general (cf. Remark \ref{Bem:CMGorensteinnormal}).
\end{remark}

About the proof of Theorem \ref{Thm:radikalJacobi}: the implication $(1) \Rightarrow (2)$ is a straightforward computation. We start with showing $(2) \Rightarrow (1)$ for some special cases, namely
for divisors in manifolds $S$ of dimension 2 (Prop.~\ref{Prop:radikaldim2}) and for $\Sing D$ Gorenstein (Prop.~\ref{Prop:Gorensteinsingular}). Note that in the section about free divisors and
logarithmic residue the theorem will be shown in the case where $D$ is a union of normal hypersurfaces.   For these cases, the assumption $\widetilde D$ smooth is not needed. The general proof of
$(2) \Rightarrow (1)$ occupies the rest of this section.

\subsection{Special cases of Theorem \ref{Thm:radikalJacobi}}  \label{Sub:Special}

\begin{lemma} \label{Prop:jacobian_singularlocus}
Let $D \subseteq S$ locally at a point $p$ be defined by an equation $h=0$. If $J_h$ is radical, then $h \in J_h$, which implies $\mc{O}_{\Sing
D,p}=\mc{O}_{S,p}/J_h$.
\end{lemma}

\begin{proof} 
One can show that  $h$ is contained in the ideal $\overline{J_h}$, the integral closure of $J_h$, see \cite{L-JT08}. 
It follows from the Theorem of Brian\c{c}on--Skoda that $\overline{J_h^{n}} \subseteq J_h$, see \cite{LT81}. Since
$(\overline{J_h})^n \subseteq \overline{J_h^n}$ (see for example \cite{L-JT08}), the $n$-th
power of $h$ is contained in $J_h$ and by our assumption $J_h$ already
contains $h$.
\end{proof}

\begin{remark}
The above lemma shows in particular that if $J_h$ is radical then also
$J_h=\overline{J_h}$.  
The blowup of $D$ in $J_h$ is the Nash blowup of $(D,p)$, see e.g. \cite{Nobile75}. It is an interesting question whether in the case of a radical Jacobian ideal this blowup is equal to the \emph{normalized Nash blowup} (see \cite[Section 3]{L-JT08}).
\end{remark}

\begin{proposition} \label{Prop:radikaldim2}
Let $\dim S=2$ and the divisor $D$ be defined at a point $p$ by a reduced $h \in \mc{O}_{S,p}$. Then $D$ has normal crossings at $p$ if and only if $D$ is free at $p$ and $D$ is either smooth or $J_h$
is radical of depth 2
on $\mc{O}_{S,p}$. 
\end{proposition}

\begin{proof}
If $D$ has normal crossings at $p$, then a simple computation shows the assertion. Conversely, suppose that $D$ is not smooth at $p$ and $J_h$ is radical of depth 2 on $\mc{O}_{S,p}$. Since $D$ is a
reduced curve, $J_h$ defines an isolated singularity, that is, $J_h$ is primary to the
maximal ideal $\mf{m}$ of $\calo_{S,p}$. Because $J_h$ is radical it follows that $J_h=\mf{m}$. This means that
$\mc{O}_{\Sing D,p}\cong \C$ at $p$. Now one
can use either a direct computation or apply the Theorem of Mather--Yau \cite{MY} for isolated singularities:
here it means that
$(D,p)$ is isomorphic to the normal crossings divisor $(N,p)$ (defined locally
at $p=(x_1,x_2)$ by the equation $\{ x_1 x_2=0\}$) if and only if their singular
loci are isomorphic. But $\mc{O}_{\Sing N,p}=\mc{O}_{S,p}/(x_1, x_2) \cong \C$ is clearly isomorphic to $\mc{O}_{\Sing D,p}$. 
Hence the assertion is shown.
\end{proof}

A particular class of Cohen--Macaulay rings are the so-called Gorenstein rings. In general, Gorenstein rings lie between complete intersections and Cohen--Macaulay rings. We prove here Thm.~\ref{Thm:radikalJacobi} for $\mc{O}_{\Sing D,p}$ Gorenstein. \\
Good references for definitions and properties regarding Gorenstein rings are \cite{Bass63,Eisenbud96,Matsumura86}.
In our situation, where the Jacobian ideal defining $\mc{O}_{\Sing D,p}$ has depth two on
$\mc{O}_{S,p}$, one sees that Gorenstein rings and complete intersection rings coincide, see \cite[Cor. 21.20]{Eisenbud96}:

\begin{theorem}[Serre] \label{Thm:Serre}
Let  $R$ be a regular local ring and $I \subseteq R$ an ideal with  $\depth(I,R)=2$. Then $R/I$ is
Gorenstein if and only if $I$ is generated by a regular sequence of length 2.
\end{theorem}

This leads to the statement of
\begin{proposition} \label{Prop:Gorensteinsingular}
Let $(S,D)$ be the pair of an $n$-dimensional complex manifold $S$ together with a divisor $D \subseteq S$ and $D=\{h=0\}$ at a point $p$. Suppose that $J_h$ is radical and
$\mc{O}_{\Sing D,p}$ is a Gorenstein ring of Krull-dimension $n-2$. Then $(\Sing D,p)$ is smooth and $D$  has locally at $p$ normal
crossings. 
\end{proposition}

First let us consider a possible counter-example to this proposition:

\begin{example} (The cusp) \label{Ex:cuspsingular} 
The (reduced) cusp curve in $\C^3$ is defined by $I=(x_{1}^3-x_2^2,x_3)$. Since $\mc{O}/I$ is a complete intersection ring, it is
Gorenstein. However, $\calo/I$ is clearly not
regular. In order that $I$ equals $J_h$ for some $h \in \mc{O}$ one must have $\D_{x_i} h=a_{i1} (x_1^3-x_2^2) + a_{i2}x_3$, for $i=1,2,3$. Now consider the $\C$-vector space $I/ \mf{m}I$. Since
$\mc{O}$ is a local ring, Nakayama's Lemma yields that $\overline{x_1^3-x_2^2}, \overline{x_3}$ form a basis of this vector space. From the Poincar\'e Lemma it follows
that  three
functions $f_1, f_2, f_3$ are partial derivatives $\D_{x_1}h, \D_{x_2}h, \D_{x_3}h$ if and only if $\D_{x_2}f_1=\D_{x_1}f_2, \D_{x_1}f_3=\D_{x_3}f_1, \D_{x_3}f_2=\D_{x_2}f_3$. Writing out these
conditions for  $a_{i1} (x_1^3-x_2^2) + a_{i2}x_3$ it follows that $a_{11}(0)=a_{21}(0)=a_{12}(0)=a_{22}(0)=0$. Hence modulo $\mf{m}$ the $\D_{x_i}h$ cannot be generated by $\overline{x_1^3-x_2^2},
\overline{x_3}$. By Nakayama's Lemma
this contradicts the fact that the $\D_{x_i}h$ also generate $I$. Thus $I$ cannot be the Jacobian ideal $J_h$ of some reduced $h$.
\end{example}

\begin{remark}
 One can construct surfaces in $\C^3$ with the cusp as singular locus by blowing down, see \cite{FaberHauser10}. However, then the Jacobian ideal will not be radical.
\end{remark}

\begin{lemma} \label{Lem:Gorensteinerzeuger}
Let $(S,D)$ be as before, with $\dim S=n$ and $D=\{h=0\}$ at a point $p=(x_1, \ldots, x_n)$. Suppose that the Jacobian ideal $J_h=(\D_{x_1}h, \ldots ,
\D_{x_n}h)$ is radical and $\mc{O}_{\Sing D,p}$ is Gorenstein of dimension $(n-2)$. Then $J_h$ can be generated by two derivatives $\D_{x_i}h, \D_{x_j}h$.
\end{lemma}

\begin{proof} 
Since $\mc{O}_{S,p}/J_h$ is Gorenstein, Thm.~\ref{Thm:Serre} yields that $J_h$ is generated by a regular sequence  $f,g$ in $\mf{m}$. Then there exists an $(n \times 2)$-matrix $A \in
M_{n,2}(\mc{O}_{S,p})$ such that
\begin{equation} \label{Equ:nakayamaJacobi}
 A  (f,g)^T= (\D_{x_1}h,\ldots, \D_{x_n} h)^T.
 \end{equation}
Consider the $\mc{O}_{S,p}/\mf{m}$-module $J_h / \mf{m} J_h$. 
Then, since $\mc{O}_{S,p}/\mf{m}=\C$,  the residue class $\overline{A}$ of the matrix $A$ of (\ref{Equ:nakayamaJacobi}) is in $M_{n,2}(\C)$. This means that we have obtained a solvable linear system of equations with
coefficients in $\C$.
Thus one sees that $f$ and $g$ are $\C$-linear combinations of two partial derivatives, wlog. of $\D_{x_1}h$ and $\D_{x_2}h$ modulo $\mf{m}J_h$.
This implies $J_h=(\D_{x_1}h, \D_{x_2}h) + \mf{m}J_h$ (as $\mc{O}_{S,p}$-modules). 
An application of Nakayama's Lemma shows the assertion.
\end{proof}

\begin{proof}[Proof of Prop.~\ref{Prop:Gorensteinsingular}]
From Lemma \ref{Lem:Gorensteinerzeuger} it follows that $J_h$ can be generated by two derivatives of $h$, wlog.  $J_h=(\D_{x_1}h, \D_{x_2}h)$. Hence one has $\D_{x_i}h =a_i (\D_{x_1}h) + b_i
(\D_{x_2}h)$, $a_i, b_i \in \mc{O}_{S,p}$, for $3 \leq i \leq n$. Consider vector fields $\delta_i=\D_{x_i}-a_i \D_{x_1} - b_i \D_{x_2}$ for $3 \leq i \leq n$. Then clearly one has $\delta_i(h)=0$.
Evaluation of these $n-2$ vector fields at $0$ shows that $\delta_3(0), \ldots, \delta_n(0)$ are $\C$-linearly independent vectors in $(S,p) \cong (\C^n,0)$.
Thus Rossi's Theorem can be applied (see \cite{Rossi63}): locally at $p$ the germ $(D,p)$ is isomorphic to $(D' \times \C^{n-2},(0,0))$, where $D'$ is locally contained in $\C^2$. Hence the problem
has been reduced
to $\dim_{\C}S=2$. Then Prop.~\ref{Prop:radikaldim2} shows that locally at $p$ the divisor $D$ is isomorphic to the union of two transversally intersecting hyperplanes. 
\end{proof}

\begin{remark}
Instead of using Rossi's Theorem in the above proof, we could use the argument in Lemma 2.3 of \cite{CNM96} and apply induction.
\end{remark}

\begin{remark}
One can also prove Proposition \ref{Prop:Gorensteinsingular} using Pellikaan's theory of the primitive ideal \cite{Pellikaan88}. Then one can show that $D$ is even analytically trivial along $\Sing D$, that is, $(D,p) \cong (D_0 \times \C^{n-2},(0,0))$, where $D_0$ is isomorphic to the fibre of $D$ at the origin defined by $h(x_1,x_2,0)$. This is carried out in detail in \cite{Faber11}.
\end{remark}

\subsection{General proof of Theorem \ref{Thm:radikalJacobi}}

The ideas to show the special cases do not lead to a proof in general. Therefore, our strategy to prove the general case is the following: \\
(i) If  $(D,p)=\bigcup_{i=1}^m (D_i,p)$ is free and a union of irreducible components and has radical Jacobian ideal, then we show that each $D_i$ is also free and has radical Jacobian ideal. \\
(ii) If $D$ is free, irreducible, has radical Jacobian ideal at $p$ and the normalization $\widetilde D$ is smooth, then $D$ is already smooth at $p$. \\
(iii) A free divisor $D$, which is a union of smooth irreducible hypersurfaces and has a radical Jacobian ideal, is already a normal crossing divisor. \\

In order to obtain (i) and (iii) we will use the results from \cite{Faber12}: therefore so-called splayed divisors are introduced. A splayed divisor $D$ is a union of transversally meeting
hypersurfaces that are possibly singular (see definition below). A divisor $(D,p)=\bigcup_{i=1}^m (D_i,p)$ with radical Jacobian ideal is splayed (Prop.~\ref{Prop:produktfreireduktion}). Then claim (iii) follows from Corollary
\ref{Cor:smoothcomp}. Note that (iii) also follows from Lemma \ref{Lem:normalsmooth}, where the logarithmic residue is used.
Claim (ii) follows from Piene's Theorem (Thm.~\ref{Thm:Piene}).

\begin{definition} \label{Def:produkt}
Let $D$ be a divisor in a complex manifold $S$, $\dim S=n$. The
divisor $D$ is called \emph{splayed} at a point $p \in S$ (or $(D,p)$ is \emph{splayed}) if one can find coordinates $(x_1, \ldots, x_n)$ at $p$ such that $(D,p)=(D_1,p) \cup (D_2,p)$ is defined by
$$h(x)=h_1(x_1, \ldots, x_k) h_2(x_{k+1}, \ldots, x_n),$$
$1\leq k \leq n-1$, where $h_i$ is the defining reduced equation of $D_i$. Note that the $h_i$ are not necessarily irreducible. The
\emph{splayed components} $D_1$ and $D_2$ are not unique. Splayed means that $D$ is the union of two products: since $h_1$ is independent of $x_{k+1}, \ldots, x_n$, the divisor $D_1$ is locally at $p$
isomorphic to a product $(D'_1,0) \times (\C^{n-k},0)$, where $(D'_1,0) \subseteq (\C^k,0)$ (and similar for $D_2$). 
\end{definition}

\begin{proposition} \label{Prop:produktfreireduktion}
Let $D=D_1 \cup D_2$ be a divisor in an $n$ dimensional complex manifold $S$ and let $D$, $D_1$ and $D_2$ at a point $p \in S$ be defined by the equations  $gh$,  $g$ and $h$, respectively. Suppose
that $J_{gh}$ is radical. Then $D$ is splayed and $J_h$ and
$J_g$ are also radical. If moreover $D$ is free at $p$ then also $D_1$ and $D_2$
are free at $p$.
\end{proposition}

\begin{proof}
See \cite{Faber12}.
\end{proof}

From this follows (using induction on $n$, see \cite{Faber12})

\begin{corollary}  \label{Cor:smoothcomp}
Let $(S,D)$ be a complex manifold, $\dim S=n$, together with a divisor $D \subseteq S$ and suppose that locally at a point $p \in S$ the divisor $(D,p)$ has the  decomposition into irreducible
components $\bigcup_{i=1}^m
(D_i,p)$ such that each $(D_i,p)$ is smooth. Let the corresponding equation of $D$ at $p$ be $h=h_1, \ldots, h_m$.
If $D$ is free at $p$ and $J_h=\sqrt{J_h}$ then $D$ has normal crossings at $p$.
\end{corollary}

In order to state Piene's Theorem below, we need some properties of the normalization of $D$, in particular of the conductor ideal.  \\
Let $(X,x)$ be the germ of an equidimensional analytic space with normalization $\pi: \widetilde X \ra X$. Then the \emph{conductor ideal} $C_{X,x}$ at  $x$ is the largest ideal that is an ideal in
$\mc{O}_{X,x}$ as well as in $\mc{O}_{\widetilde X,x}$  (we write $C_X$ if there is no danger of confusion regarding the point $x$). Here note that $\mc{O}_{\widetilde X,x}$ is canonically isomorphic
to $\pi_*\mc{O}_{\widetilde X,x}$ and also to the ring of weakly holomorphic functions on $X$, see e.g. \cite{dJP}. Alternatively, the conductor $C_{X,x}$ can be defined as the
ideal quotient $(\mc{O}_{X,x} : \pi_*\mc{O}_{\widetilde X,x})=\{ f \in \mc{O}_{X,x}: f \pi_*\mc{O}_{\widetilde X,x} \subseteq \mc{O}_{X,x} \}$ or as
$\mathrm{Hom}_{\mc{O}_{X,x}}(\pi_*\mc{O}_{\widetilde X,x},\mc{O}_{X,x})$.
Note that $C_X$ is a coherent sheaf of ideals over $\mc{O}_{X}$.

\begin{theorem}[Piene's Theorem]  \label{Thm:Piene}
Let $X$ be a locally complete intersection variety of dimension $s$ over an algebraically closed field $k$. Let $f: Z \ra X$ be a desingularization of $X$ and denote by $I_f=F^0_{Z} (\Omega^1_{Z /
X})$ the ramification ideal of $f$ in $\mc{O}_Z$ and by $J_X$ the ideal $F^s_X(\Omega^1_{X/k})$. Suppose that $f$ is finite. Then there is an equality of ideals
$$ J_X \mc{O}_Z = I_f C_X \mc{O}_Z.$$
\end{theorem}

\begin{proof}
See Theorem 1 and Corollary 1 of \cite{Piene79}.
\end{proof}

\begin{remark}  \label{Bem:Piene}
(1) The above theorem also holds in the analytic case since all constructions in the proof of Theorem 1 of \cite{Piene79} also work, cf. \cite{MondPellikaan,Aleksandrov90,GrangerSchulze11}.   \\
(2) The ideal $J_X$ is sometimes also called ``Jacobian ideal of $X$''. We need the above theorem in the case where $X$ is a divisor $D$ in a complex manifold $S$ defined locally at a point $p$ by
$\{h=0\}$. Then $J_D$ is simply the ideal $J_h$ in $\mc{O}_{D,p}$ (resp. the ideal $((h)+J_h) \subseteq \mc{O}_{S,p}$) defining the singular locus $(\Sing D,p)$. Clearly, $D$ is at $p$ a
complete intersection. 
\end{remark}

\begin{lemma}  \label{Lem:Jacobiconductor}
Let $(S,D)$ be a complex manifold, $\dim S=n$, together with a divisor $D \subseteq S$ and suppose that $D=\{ h=0\}$ is free at $p$ and that $J_h= \sqrt{J_h}$. Then the Jacobian ideal equals the
conductor of the normalization, that is $J_h=C_{D,p}$.
\end{lemma}

\begin{proof}
The inclusion $J_h \subseteq C_{D,p}$ always holds. Since in case of a free divisor the singular locus is non-normal, it follows that $\mathrm{supp}(\calo_{D,p}/J_h)=\mathrm{supp}(\calo_{D,p}/C_{D,p})$. Because $J_h$ is radical, it is actually equal to $C_{D,p}$. 
\end{proof}

\begin{lemma}  \label{Lem:unramifiedisomorphism}
 Let $\pi: \widetilde D \ra D$ be the normalization of an irreducible $(D,p)$ and suppose that $\pi$ is unramified, that is, $\Omega^1_{\widetilde D / D}=0$. Then $\pi$ is an isomorphism.
\end{lemma}

\begin{proof} See e.g. Lemma 4.1 of \cite{GrangerSchulze11}.
\end{proof}

\begin{proof}[Proof of Theorem \ref{Thm:radikalJacobi}]
First let us suppose that $D$ is singular and has normal crossings at $p \in \Sing D$. Then we can assume that $D=\bigcup_{i=1}^m (D_i,p)$ is given by the equation $h=x_1 \cdots x_m$, $1 < m \leq n$ where each $x_i$
corresponds to an irreducible component $D_i$ passing through $p$. Then 
$$J_h=\sum_{i=1}^m (x_1 \cdots \hat x_i \cdots x_m).$$
The ideal $J_h$ is the ideal generated by the maximal minors of the $(n-1) \times n$ matrix with rows $(x_1, 0 , \ldots,0, -x_i,0, \ldots, 0)$ for $i=2, \ldots, n$. Therefore $\calo / J_h$ is Cohen--Macaulay of dimension $n-2$ by the Theorem of Hilbert--Burch. 
Using facts about primary decomposition of monomial ideals, see e.g.\cite{HostenSmith02}, it follows that
$$J_h=\bigcap_{1 \leq i < j \leq m} (x_i, x_j),$$
which is clearly radical.  The normalization of a normal crossing divisor $D=\bigcup_{i=1}^m D_i$ is smooth since it is the
disjoint union of the smooth components $D_i$. \\
Conversely, suppose that $J_h=\sqrt{J_h}$ and $\mc{O}_{\Sing D,p}$ is Cohen--Macaulay of
dimension $(n-2)$  and moreover that $\mc{O}_{\widetilde D, \pi^{-1}(p)}$ is regular (denote by $\pi: \widetilde D \ra D$ the normalization morphism, resp. by $\pi_i: \widetilde D_i \ra D_i$ the
normalization of $D_i$). Note here that always $\mc{O}_{\widetilde D}=\bigoplus_{i=1}^m \mc{O}_{\widetilde D_i}$ holds, see \cite{dJP}.
Prop.~\ref{Prop:produktfreireduktion} implies that each $D_i$ is free at $p$ and has a radical Jacobian ideal $J_{h_i}$.  
By  our hypothesis, Piene's Theorem (Thm.~\ref{Thm:Piene}) and remark \ref{Bem:Piene} yield the equality of ideals
$$ C_{D_i}  I _{\pi_i} \mc{O}_{\widetilde D_i,p} =J_{h_i} \mc{O}_{\widetilde D_i,p}.$$
Since by Lemma \ref{Lem:Jacobiconductor}  one has $J_{h_i}=C_{D_i,p}$ in $\mc{O}_{D_i,p}$, it follows, using  Nakayama's Lemma, that
$I_{\pi_i}=\mc{O}_{\widetilde D_i,p}$.  Hence $\Omega^1_{\widetilde D_i / D_i}=0$.
By Lemma \ref{Lem:unramifiedisomorphism}, each $\pi_i$ is an isomorphism and thus $D_i$ is already normal at $p$. By definition, the only free divisor that is normal is the smooth divisor, so it
follows that each $D_i$  is  smooth at $p$.
For $(D,p)=\bigcup_{i=1}^m(D_i,p)$ this means that we are in the situation of Corollary \ref{Cor:smoothcomp} and the assertion follows. 
\end{proof}

\begin{remark}
We can also give a different proof of $(2) \Rightarrow (1)$ of Thm.~\ref{Thm:radikalJacobi} using the characterization of normal crossings by the logarithmic residue of
Prop.~\ref{Thm:irreduzibel}:  let $(D,p)=\bigcup^{m}_{i=1}(D_i,p)$ be the decomposition into irreducible components and suppose that $J_h=\sqrt{J_h}$. Then the singular locus of the singular locus, $\Sing (\Sing D)$,  is of
dimension less than or equal to $(n-3)$. By Proposition \ref{Prop:Gorensteinsingular}, $D$ has normal crossings at smooth points of $\Sing D$. Hence $D$ has normal crossings in codimension 1. From a result of Saito \cite[Lemma 2.13]{Saito80} it follows that the logarithmic residue is holomorphic on the normalization, that is, $\rho(\Omega^1_{S}(\log D))=\pi_*\mc{O}_{\widetilde D}$. Then Prop.~\ref{Thm:irreduzibel} shows that
$D$ is a normal crossing divisor. \\
\end{remark}

\begin{remark} \label{Bem:CMGorensteinnormal}
We do not know whether the condition on the normalization of $D$ in Theorem \ref{Thm:radikalJacobi} is necessary. If $(D,p)$ is free and has a radical Jacobian ideal, then by Lemma
\ref{Lem:basisweaklyholom} the normalization $(\widetilde D,\pi^{-1}(p))$ is  Cohen--Macaulay. Here the question is if Piene's Theorem about the equality of the ideals holds in a more general context
or if we can find an alternative argument to show that the normalization $\pi: \widetilde D \ra D$ is unramified.
\end{remark}

\begin{Conj}  \label{Conj:gorensteinnormal}
Let $D \subseteq S$ be a divisor in a complex manifold $S$ that is locally at a point $p$ given by $h=0$ and denote by $\pi:\widetilde D \ra D$ its normalization. Suppose that $D$ is free at $p$ and
that $J_h=\sqrt{J_h}$. Then the normalization $\widetilde D$ of $D$ is already smooth at $\pi^{-1}(p)$.
\end{Conj}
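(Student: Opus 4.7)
The plan is to reduce to the irreducible case and then push the proof of Theorem \ref{Thm:radikalJacobi} through without the hypothesis of smooth normalization, by applying a generalized version of Piene's theorem to a desingularization of $\widetilde D$. First I would use Proposition \ref{Prop:produktfreireduktion} to split off irreducible components: if $(D,p) = \bigcup_{i=1}^m (D_i,p)$, then each $(D_i,p)$ is free with radical Jacobian ideal, and since $\mc{O}_{\widetilde D,\pi^{-1}(p)} = \bigoplus_i \mc{O}_{\widetilde D_i, \pi_i^{-1}(p)}$, smoothness of $\widetilde D$ at $\pi^{-1}(p)$ is equivalent to smoothness of each $\widetilde D_i$ at $\pi_i^{-1}(p)$. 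So I may assume $(D,p)$ is irreducible; then $\widetilde D$ is a normal irreducible germ, which in addition is Cohen--Macaulay by Lemma \ref{Lem:basisweaklyholom}.

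Next I would pick a resolution of singularities $\sigma: Z \to \widetilde D$ (Hironaka) and consider the composition $f = \pi \circ \sigma: Z \to D$, a proper birational desingularization of $D$. The aim is to run the Piene--Nakayama argument from the proof of Theorem \ref{Thm:radikalJacobi} on $f$: if one has the identity
\[
J_h \mc{O}_Z \;=\; I_f\, C_{D,p}\, \mc{O}_Z,
\]
then Lemma \ref{Lem:Jacobiconductor} gives $J_h = C_{D,p}$, so $I_f\, J_h \mc{O}_Z = J_h \mc{O}_Z$. Because $Z$ is smooth, $\mc{O}_{Z,z}$ is a regular (hence domain) local ring and the pullback ideal $J_h \mc{O}_{Z,z}$ is nonzero (as $f$ is surjective and $J_h \neq 0$), so Nakayama's lemma applied locally at each $z \in Z$ forces $I_f = \mc{O}_Z$, i.e.\ $\Omega^1_{Z/D} = 0$. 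From the cotangent sequence
\[
\sigma^*\Omega^1_{\widetilde D / D} \;\longrightarrow\; \Omega^1_{Z/D} \;\longrightarrow\; \Omega^1_{Z/\widetilde D} \;\longrightarrow\; 0
\]
one would then read off $\Omega^1_{Z/\widetilde D} = 0$, so $\sigma$ is unramified, proper and birational; since $\widetilde D$ is normal, an extension of Lemma \ref{Lem:unramifiedisomorphism} (or Zariski's main theorem) forces $\sigma$ to be an isomorphism, whence $\widetilde D$ is smooth.

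The main obstacle is the very first step of this sketch: Piene's theorem (Thm.~\ref{Thm:Piene}) is stated for \emph{finite} morphisms, whereas $f = \pi \circ \sigma$ is only proper birational, since $\sigma$ may contract exceptional divisors of $\widetilde D$. Thus one must either extend the identity $J_X \mc{O}_Z = I_f C_X \mc{O}_Z$ to proper birational $f$ (examining Piene's proof, the formula is a Fitting-ideal computation from the cotangent sequence of $f$ together with the Jacobi-criterion for the lci $D$, and the argument should in principle localize on $Z$ away from the exceptional locus and then propagate by flatness of $J_h \mc{O}_Z$), or bypass Piene entirely by arguing on $\widetilde D$ directly. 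A natural alternative is to use the dual logarithmic residue of Granger--Schulze discussed in the last section: freeness of $D$ together with radicality of $J_h$ imposes strong constraints on $\pi_*\mc{O}_{\widetilde D}$ as a module over $\mc{O}_D/J_h$, and one may hope to deduce vanishing of $\Omega^1_{\widetilde D / D}$ from the pairing between logarithmic residues and $\pi_*\mc{O}_{\widetilde D}$, again reducing to Lemma \ref{Lem:unramifiedisomorphism}.
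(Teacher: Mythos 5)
This statement is a \emph{conjecture} in the paper, not a theorem: the author explicitly leaves it open, and Remark~\ref{Bem:CMGorensteinnormal} pinpoints the missing ingredient as ``the question \ldots if Piene's theorem about the equality of the ideals holds in a more general context or whether we can find an alternative argument to show that the normalization \ldots is unramified.'' Your proposal runs into exactly this wall and does not get past it, as you yourself concede in the final paragraph. The parts that do work are the routine ones, and they mirror what the paper already does in the proof of Theorem~\ref{Thm:radikalJacobi}: the reduction to irreducible components via Proposition~\ref{Prop:produktfreireduktion} and $\mc{O}_{\widetilde D}=\bigoplus_i\mc{O}_{\widetilde D_i}$, the identification $J_h=C_{D,p}$ from Lemma~\ref{Lem:Jacobiconductor}, the Nakayama/determinant-trick step forcing $I_f=\mc{O}_Z$ once the Piene identity is available, and the ``unramified $+$ birational $+$ normal target $\Rightarrow$ isomorphism'' conclusion. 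None of these is the content of the conjecture; the content is precisely the identity $J_h\mc{O}_Z=I_f\,C_{D}\,\mc{O}_Z$ for a desingularization $f=\pi\circ\sigma$ that is only proper birational, not finite, and for that you offer only a hope.

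Concretely, the suggested repair ``localize on $Z$ away from the exceptional locus and then propagate by flatness of $J_h\mc{O}_Z$'' is not an argument: two ideal sheaves on $Z$ that agree off the exceptional divisor of $\sigma$ need not agree on it (they only have the same divisorial part away from it), and $J_h\mc{O}_Z$ carries no flatness that would force the extension; Piene's proof genuinely uses finiteness (via the determinantal/Fitting-ideal bookkeeping for a finite module $\pi_*\mc{O}_Z$ and duality for the conductor), so the extension is not a formality. The alternative route you sketch at the end --- extracting $\Omega^1_{\widetilde D/D}=0$ directly from the dual logarithmic residue of Granger--Schulze --- is again only a statement of intent; this is in fact the direction in which the conjecture was eventually settled, but nothing in the tools quoted in this paper (Proposition~\ref{Prop:resconductor} needs $\widetilde D$ Cohen--Macaulay and still only yields $\widetilde J_h=C_D$, not unramifiedness of $\pi$) closes the gap. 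A small additional caveat: your appeal to Lemma~\ref{Lem:basisweaklyholom} for Cohen--Macaulayness of $\widetilde D$ presupposes $\rho(\Omega^1_{S,p}(\log D))=\pi_*\mc{O}_{\widetilde D,p}$, which under your hypotheses has to be extracted first from $J_h=C_{D,p}$ and the Granger--Schulze identification $\rho(\Omega^1_{S}(\log D))=\widetilde J_h^{\vee}$, so it should not be cited as if it followed from freeness and radicality alone. In sum: the proposal is a reasonable map of the terrain, but the decisive step is missing, and it is the same step the paper itself identifies as open.
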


This conjecture is supported by the results of Granger and Schulze about the dual logarithmic residue, see \cite{GrangerSchulze11}. 

\subsection{Radical Jacobian ideals} \label{Sub:radicalJacobiallg}

 Let $D$ be a divisor in a smooth  $n$-dimensional manifold $S$ that is locally at a point $p$ given by  $h \in \mc{O}_{S,p}$. Suppose that $J_h$ is radical. Which ideals $I \subseteq \mc{O}_{S,p}$ can be such radical Jacobian ideals $J_h$?
More precisely: given a radical ideal $I \subseteq \mc{O}_{S,p}$, when does there exist a divisor $(D,p)=\{ h=0\}$ such that $I=J_h$? \\

If $I$ is a complete intersection then in Proposition \ref{Prop:jacobivollstdsallg} it is shown that $I$ is a Jacobian ideal if and only if it defines a smooth variety. Apart from that, the case of
$\dim S=2$ was treated in Proposition \ref{Prop:radikaldim2}. For $\dim S=3$ it also easily follows that for a divisor $D=\{ h=0\}$ with $J_h=\sqrt{J_h} \neq (1)$ and smooth normalization 
one of the two cases occurs: \\
(i) $\depth(J_h,\mc{O}_{S,p})=3$ and $(D,p)$ is an $A_1$-singularity. \\
(ii) $\depth(J_h, \mc{O}_{S,p})=2$ and $D$ has normal crossings at $p$. \\
Here (i) directly follows from Prop.~\ref{Prop:jacobivollstdsallg}. To prove (ii) one uses that  a reduced one-dimensional local ring is Cohen--Macaulay, see e.g. \cite{Eisenbud96}, and Theorem
\ref{Thm:radikalJacobi}.

\begin{Qu}
Does there exist a surface $(D,p) \subseteq (\C^3,p)$ such that $(D,p)$ is free and $J_h=\sqrt{J_h}\neq (1)$ but $(\widetilde D,\tilde p)$ is not smooth? 
\end{Qu}

For $\dim S \geq 4$ the situation is more involved, we split it into two parts. 

\subsubsection{Codimension 1 singular locus}

\begin{example} \label{Ex:singequidim}
 Consider a manifold $(S,p) \cong (\C^4,0)$ with coordinates $p=(x,y,z,w)$. Then the ideal $I=(x,y) \cap (z,w)=(xz,xw,yz,yw) \subseteq \mc{O}_{S,p}$ is radical and defines an
equidimensional 2-dimensional analytic space germ $(Z,p)$. One can show that $\mc{O}_{S,p}/I$ is not Cohen--Macaulay, which implies that $I$ is not a complete
intersection. By computation we show that there does not exist an $h \in \mc{O}_{S,p}$ such that $I=J_h$: first note that $I$ is the Jacobian ideal of a divisor defined by some $h \in \mc{O}_{S,p}$ if
and only if there
exists a matrix $A \in GL_4(\mc{O}_{S,p})$ such that
\begin{equation} \label{Equ:jacob}
(\D_{x}h, \D_y h, \D_z h, \D_w h)^T= A \underline{f}^T,
\end{equation}
where $\underline{f}$ is the vector $(f_1, \ldots, f_4):=(xz,yz,xw,yw)$. 
Hence the matrix $A(0)$ has to be in $GL_4(\C)$. The partial derivatives of $h$ have to satisfy six equations, namely
\begin{align*}  \D_{xy}h &  = \D_{yx}h, \D_{xz}h= \D_{zx} h, \D_{xw} h = \D_{wx}h, \\ 
\D_{yz}h & =\D_{zy}h, \D_{yw}h=\D_{wy}h, \D_{zw}h=\D_{wz}h.
\end{align*}
An explicit comparison of the order zero terms of (\ref{Equ:jacob}) plugged into these relations shows that in the matrix $A(0)$ the first
row is zero, which means that $A \not \in GL_4(\mc{O}_{S,p})$. 

\end{example}

\begin{Conj} 
Let $D \subseteq S$ be a divisor defined at a point $p$ by $h \in \mc{O}_{S,p}$. If $J_h$ is radical, of depth 2 on $\mc{O}_{S,p}$  and equidimensional, then $\mc{O}_{S,p}/J_h$ is already
Cohen--Macaulay. In other words: we conjecture that if a divisor that has locally at a point $p \in S$ an  equidimensional radical Jacobian ideal of depth 2  is already free at $p$.
\end{Conj}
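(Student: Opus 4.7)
The plan is to reduce the statement to a generic point of $\Sing D$, where Prop~\ref{Prop:Gorensteinsingular} applies directly, and then to try to extend the resulting Cohen--Macaulay property to the whole germ $(\Sing D,p)$.

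First, I would exploit the hypotheses that $J_h$ is radical and $\mc{O}_{S,p}/J_h$ is equidimensional of dimension $n-2$: at a generic smooth point $q$ of any irreducible component $\Sigma$ of $(\Sing D,p)$, only one component passes through $q$, so $\mc{O}_{\Sing D,q}$ is reduced (as $J_h$ is radical) and supported on the single smooth germ $(\Sigma,q)$ of dimension $n-2$, hence is a regular local ring, in particular Gorenstein of Krull-dimension $n-2$. Prop~\ref{Prop:Gorensteinsingular} then applies at $q$ and shows that $(D,q)$ has normal crossings; in particular $\mc{O}_{\Sing D,q}$ is Cohen--Macaulay there. Thus the non-CM locus $B$ is a closed analytic subset of $\Sing D$ of codimension at least one in $\Sing D$, i.e.\ of dimension at most $n-3$ in $S$.

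The remaining task, which is the main obstacle and the reason the statement is only a conjecture, is to exclude $p\in B$. The most natural line of attack is to exploit the constraint that the generators of $J_h$ are not arbitrary but are partial derivatives of a single function, and must therefore satisfy the integrability relations $\partial_j\partial_i h=\partial_i\partial_j h$. Example~\ref{Ex:singequidim} is exactly this type of argument: a radical, equidimensional, non-CM candidate ideal fails to be a Jacobian ideal because the transition matrix between its generators and any putative system of partial derivatives is forced to degenerate at the origin. The goal would be to upgrade this pointwise computation to a structural obstruction along the low-dimensional locus $B \subseteq \Sing D$. The key structural input to exploit is Lemma~\ref{Prop:jacobian_singularlocus}, which yields the nontrivial relation $h \in J_h$ and thereby controls the coefficients appearing in the integrability constraints.

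An alternative, more cohomological route, is to attempt a Hartogs-type extension. Since $B$ has codimension at least three in $S$, one might try to construct an $\mc{O}_{S,p}$-free resolution of $\mc{O}/J_h$, or equivalently a Saito basis of $\Der_{S,p}(\log D)$, on the complement of $B$ and extend it across, whence freeness at $p$, and thus Cohen--Macaulayness by Aleksandrov's characterization, would follow. The obstacle here is that such extensions are only automatic as abstract coherent modules, and one must verify that the extended objects still satisfy the precise relations that come from the single function $h$. In either approach, the central difficulty is the same: propagating Cohen--Macaulayness from a dense open subset of $(\Sing D,p)$ to $p$ itself, using only the fact that the generators of $J_h$ are the partial derivatives of a single reduced $h$.
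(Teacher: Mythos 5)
This statement is an open \emph{conjecture} in the paper: no proof of it is given there, so the only question is whether your proposal actually settles it, and by your own account it does not. Your first reduction is sound and consistent with what the paper already knows: radicality of $J_h$ localizes, at a generic smooth point $q$ of a component of the (reduced) singular locus the germ $\mc{O}_{\Sing D,q}$ is regular of dimension $n-2$, hence Gorenstein, so Prop.~\ref{Prop:Gorensteinsingular} gives normal crossings, in particular freeness, at $q$; combined with the analyticity of the non-free locus (Scheja's theorem, as invoked in the proof of Thm.~\ref{Thm:Saitoanswer}) this confines the non-Cohen--Macaulay locus $B$ to codimension $\geq 1$ in $\Sing D$. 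But this is precisely the easy part; the conjecture is exactly the assertion that $p \notin B$, and that is the step you explicitly leave open. So what you have written is a correct description of why the statement is plausible and why it is hard, not a proof.

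Concerning the two routes you sketch: neither, as stated, can be completed without a genuinely new idea. The integrability computation of Example~\ref{Ex:singequidim} is a pointwise obstruction showing that one specific radical, equidimensional, non-CM ideal is not a Jacobian ideal; turning the symmetry of second derivatives plus $h \in J_h$ (Lemma~\ref{Prop:jacobian_singularlocus}) into a structural statement valid for an arbitrary germ along $B$ is exactly the missing content, and you give no mechanism for it. The Hartogs-type route fails as a matter of principle if the special hypotheses are not used: $\Der_{S,p}(\log D)$ and $\Omega^1_{S,p}(\log D)$ are only reflexive, and a reflexive module that is free off an analytic subset of codimension $\geq 3$ in $S$ need not be free --- the paper's own T\"ulle (Example~\ref{Ex:tuellefrei}) is free, indeed normal crossing, off the origin in $\C^3$ yet not free at the origin, so freeness cannot be propagated across $B$ by any argument that does not crucially use radicality and the fact that the generators are the partials of a single reduced $h$. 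Thus the central difficulty you name in your last sentence is real, and it is not resolved anywhere in your proposal; the statement remains, as in the paper, a conjecture.
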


If $J_h$ is not equidimensional, the only thing we can say is that it is the intersection of some prime ideals whose minimal height is 2. 

\begin{example}
(The Jacobian ideal can be of height 2 and radical but it may not be equidimensional) Consider
$S=\C^5$ at the origin with coordinates $(x,y,z,s,t)$. Let the divisor $D$ be locally defined by $h=(x^2+y^2+z^2)(s^2-t^2) \in \mc{O}=\C\{x,y,z,s,t\}$. Note that $D$ is splayed and the union of a
normal crossing divisor and a cone. The Jacobian ideal $J_h$ is radical, its height is 2 and it has the prime decomposition
$$(x,y,z) \cap (s-t,x^2+y^2+z^2) \cap (s,t) \cap (s+t, x^2+y^2+z^2).$$
The ideal $J_h$ is not unmixed and hence $\mc{O}_{\Sing D}=\mc{O}/J_h$ is not Cohen--Macaulay.
\end{example}

\begin{Qu} 
 Suppose that $J_h$ of a divisor $D$ is radical and of height 2 but not equidimensional. Which $J_h$ are possible?
\end{Qu}

\subsubsection{Higher codimensional singular locus}
In this case the divisor $D$ has to be irreducible and normal.

\begin{proposition} \label{Prop:jacobivollstdsallg}
 Let $J_h=\sqrt{J_h}$ be the Jacobian ideal of the divisor $D \subseteq S$ and denote by $\Sing D$ its singular locus with associated ring $\mc{O}_{\Sing D,p}=\mc{O}_{S,p} / J_h$ at $p$. Suppose that
$\mathrm{codim}_p( \Sing D, S)=k$ and that $(\Sing D,p)$ is a complete intersection. Then $D$ is isomorphic to $\{ x_1^2 + \cdots+ x_k^2=0\}$, that is, $D$ has locally along $\Sing D$ an
$A_1$-singularity. \\
In particular, it follows that $(\Sing D,p)$ is a complete intersection if and only if $(\Sing D,p)$ is smooth and thus $D$ is isomorphic to $\{ x_1^2 + \cdots+ x_k^2=0\}$.
\end{proposition}

\begin{proof}
 The proof is  similar to the free divisor case, see Prop.~\ref{Prop:Gorensteinsingular}. Since $(\Sing D,p)$ is a complete intersection, there exist $f_1, \ldots, f_k \in \mc{O}_{s,p}$ such that
$J_h=(f_1, \ldots ,f_k)$. Since the $f_i$ generate $J_h$, there is an $n \times k$ matrix $A$ with entries in $\mc{O}_{S,p}$ such that 
$$ A (f_1, \ldots, f_k)^T=(\D_{x_1}h, \ldots, \D_{x_n}h)^T. $$ 
Similar to Lemma \ref{Lem:Gorensteinerzeuger} consider the $\mc{O}/\mf{m}=\C$ module $J_h/ \mf{m} J_h$.
Hence wlog. one may assume that $J_h=(\D_{x_1}h, \ldots, \D_{x_k}h)$.
Similar to the proof of Prop.~\ref{Prop:Gorensteinsingular},
$D$   may be considered in $(\C^k,0)$ and defined by $h^*(x_1, \ldots, x_k)=h(x_1, \ldots, x_k,0)$. Then since $J_{h^*}$ defines a complete intersection of codimension $k$ in $(\C^k,0)$, it defines an
isolated singularity. 
Like in the 2-dimensional case (Prop.~ \ref{Prop:radikaldim2}) we find that $D$ is locally isomorphic to $\{ x_1^2+ \cdots + x_k^2=0\}$. 
\end{proof}

It is not clear how to treat non-complete intersection radical Jacobian ideals of height $k$, $3 \leq k <n$ in $\mc{O}$, we do not even know if there exist divisors $D=\{h=0\}$ such that
$J_h=\sqrt{J_h}$ of height $k \geq 3$ is not a complete intersection. For example, one can show in a similar way like in example \ref{Ex:singequidim} that the equidimensional radical ideal
$I=(x_1,x_2,x_3) \cap (x_4,x_5,x_6) \subseteq \mc{O}=\C\{x_1, \ldots, x_6\}$ cannot be the Jacobian ideal of some $h \in \calo$.

The propositions and examples above motivate the following 

\begin{Conj}
 Let $D$ be a divisor in a complex manifold $S$, defined locally at a point $p$ by a reduced $h \in \mc{O}_{S,p}$. Suppose that the Jacobian ideal $J_h$ is radical, equidimensional and of depth $\geq
3$ on $\mc{O}_{S,p}$. Then the variety $\Sing
D$ with coordinate ring $\mc{O} / J_h$ is a complete intersection, that is, $\Sing D$ is Cohen--Macaulay and must even be smooth by Prop.~\ref{Prop:jacobivollstdsallg}.
\end{Conj}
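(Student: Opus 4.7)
The stated conjecture reduces, via Prop.~\ref{Prop:jacobivollstdsallg}, to showing that under the hypotheses the ideal $J_h$ is a complete intersection: once $J_h$ is a complete intersection of height $k$, Prop.~\ref{Prop:jacobivollstdsallg} automatically delivers smoothness of $\Sing D$ and the $A_1$-normal form of $D$. Setting $k = \mathrm{ht}(J_h) = \depth(J_h,\mc{O}_{S,p}) \geq 3$ (the equality uses regularity of $\mc{O}_{S,p}$), Krull's Hauptidealsatz gives $\mu(J_h) \geq k$, so the target is the reverse inequality $\mu(J_h) \leq k$, where $\mu$ denotes the minimal number of generators; note that in the regular (hence Cohen--Macaulay) ring $\mc{O}_{S,p}$, any $k$-element generating set of a height-$k$ ideal is automatically a regular sequence, so $\mu(J_h) = k$ is equivalent to the complete-intersection property.

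First I would follow the template of Lemma~\ref{Lem:Gorensteinerzeuger} and Prop.~\ref{Prop:Gorensteinsingular}: pick a minimal system of generators $f_1, \ldots, f_m$ of $J_h$ (so $m = \mu(J_h)$) and write $\partial_{x_i} h = \sum_j a_{ij} f_j$ for a matrix $A = (a_{ij}) \in M_{n,m}(\mc{O}_{S,p})$. Passing to $J_h / \mathfrak{m}J_h$ and using Nakayama shows that after a linear change of coordinates one may assume the first $m$ partial derivatives $\partial_{x_1}h,\ldots,\partial_{x_m}h$ form a minimal generating set of $J_h$, and $\partial_{x_\ell}h = \sum_{j=1}^m b_{\ell j}\,\partial_{x_j}h$ for $\ell > m$ with $b_{\ell j} \in \mc{O}_{S,p}$.

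The next and hardest step is to exploit the Poincar\'e integrability relations $\partial_{x_i}\partial_{x_j}h = \partial_{x_j}\partial_{x_i}h$ against the equations $\partial_{x_\ell}h = \sum_j b_{\ell j} \partial_{x_j}h$, in the spirit of Example~\ref{Ex:singequidim}. Evaluated modulo $\mathfrak{m}$ these symmetry conditions produce a linear system over $\C$ on the values $b_{\ell j}(0)$ and the coefficients of the linear parts of the $\partial_{x_j}h$; the equidimensionality plus radicality of $J_h$ should yield enough such relations to force $m \leq k$, giving the complete-intersection conclusion. In the Cohen--Macaulay case one could alternatively invoke structure theorems for perfect ideals: for $k=3$, Buchsbaum--Eisenbud classifies Gorenstein ideals via Pfaffians of a skew-symmetric matrix, and one could check case-by-case that the integrability constraints rule out the non-complete-intersection possibilities.

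The main obstacle is to carry out the integrability argument \emph{without} the a priori assumption that $\mc{O}/J_h$ is Cohen--Macaulay, as one must rule out every radical equidimensional Jacobian ideal of height $\geq 3$ that fails to be a complete intersection. A natural intermediate target, probably a necessary first step, is to establish Cohen--Macaulayness of $\mc{O}/J_h$ under the hypotheses --- the depth-$3$ analogue of the free divisor condition --- perhaps by arguing via the normalization and an extension of Piene's ideal equality (Thm.~\ref{Thm:Piene}). Once Cohen--Macaulayness is established, the richer algebraic structure of minimal free resolutions of perfect ideals provides significantly more leverage for the final complete-intersection conclusion.
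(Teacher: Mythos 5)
Your proposal does not prove the statement, and indeed it cannot be checked against a proof in the paper because there is none: this is posed as an open conjecture, motivated by Prop.~\ref{Prop:jacobivollstdsallg} and the examples, not as a theorem. Your reduction is the right one and matches the paper's way of thinking: it suffices to show $\mu(J_h)=\mathrm{ht}(J_h)=\depth(J_h,\mc{O}_{S,p})=k$, since in the regular local ring $\mc{O}_{S,p}$ a height-$k$ ideal generated by $k$ elements is a complete intersection, and then Prop.~\ref{Prop:jacobivollstdsallg} gives smoothness of $\Sing D$ and the $A_1$-form. The Nakayama step (after a change of generators, $J_h=(\D_{x_1}h,\ldots,\D_{x_m}h)$ minimally, with the remaining partials expressed as combinations) is also fine and is exactly the argument of Lemma~\ref{Lem:Gorensteinerzeuger}. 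But the decisive inequality $m\le k$ is precisely the content of the conjecture, and at that point your text only says that the symmetry relations $\D_{x_i}\D_{x_j}h=\D_{x_j}\D_{x_i}h$, read modulo $\mf{m}$, ``should yield enough relations.'' The paper's Example~\ref{Ex:singequidim} shows how such order-zero integrability computations exclude one explicit ideal, namely $(x,y)\cap(z,w)$ (and similarly $(x_1,x_2,x_3)\cap(x_4,x_5,x_6)$), but nothing in your sketch organizes this into an argument valid for an arbitrary radical equidimensional ideal of height $\ge 3$; information modulo $\mf{m}$ sees only finitely many Taylor coefficients and there is no reason it suffices to exclude every non--complete-intersection candidate.

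The auxiliary steps you propose also do not work as stated. First, the hope of proving Cohen--Macaulayness of $\mc{O}/J_h$ ``via the normalization and an extension of Piene's ideal equality'' runs into the fact that under the hypothesis $\depth(J_h,\mc{O}_{S,p})\ge 3$ the singular locus has codimension $\ge 2$ in the hypersurface $D$, so by Serre's criterion $D$ is already normal: the normalization is the identity, the conductor is the unit ideal, and Piene's theorem (which in the paper is applied with $Z=\widetilde D$ finite over $D$, i.e.\ in the non-normal, free-divisor situation) gives no leverage; a finite desingularization of a normal singular $D$ does not exist. Second, the Buchsbaum--Eisenbud Pfaffian structure theorem you invoke for $k=3$ classifies grade-$3$ \emph{Gorenstein} ideals, not arbitrary perfect (let alone merely radical equidimensional) ideals of height $3$, so the proposed case-by-case elimination is not available in the generality needed. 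In short, you have correctly identified the target and reproduced the reductions the paper already makes, but the two essential steps --- Cohen--Macaulayness of $\mc{O}/J_h$ and the bound $\mu(J_h)\le\mathrm{ht}(J_h)$ --- remain unproved, so the conjecture stays open.
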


\section{Normal crossings and logarithmic differential forms and vector fields}   \label{Sec:logdiff}

Usually, free divisors are introduced via logarithmic differential forms and vector fields.  These will be introduced here and also the logarithmic residue, which will be needed in this and the next section. The corresponding theory was developed by K.~Saito in \cite{Saito80}, where also proofs for most of our assertions can be found.\\
In this section we give a characterization of a normal crossing divisor in terms of generators of its module of logarithmic differential forms resp.~vector fields (Prop.~\ref{Thm:ncequivclosedforms}).
Namely, a divisor $D \subseteq S$ has normal crossings at a point $p$ if and only if $\Omega^1_{S,p}(\log D)$ is a free $\mc{O}_{S,p}$-module and has a basis
of closed forms or if and only if $\Der_{S,p}(\log D)$ is a free
$\mc{O}_{S,p}$-module and has a basis of commuting vector fields (this means that  there
exist logarithmic derivations $\delta_1, \ldots, \delta_n$ such that the $\delta_i$ form a basis of $\Der_{S,p}(\log D)$ and
$[\delta_i,\delta_j]=0$ for all $i,j=1, \ldots, n$). We remark that we only show the \emph{existence} of bases with these properties of
$\Omega^1_{S,p}(\log D)$ and $\Der_{S,p}(\log D)$ in case $D$ has normal crossings at $p$. We do not have a procedure
to \emph{construct} such bases. Thus, strictly considered, Prop.~\ref{Thm:ncequivclosedforms} does not satisfy our requirements on an effective algebraic criterion for normal crossings. \\


Let $D$ be a divisor in $S$ defined at $p$ by $D=\{h=0\}$. A \emph{logarithmic vector field} (or \emph{logarithmic derivation}) (along $D$) is a holomorphic vector field  on $S$, that is, an element
of $\Der_{S}$, satisfying
one of the two equivalent
conditions: \\
(i) For any smooth point $p$ of $D$, the vector $\delta(p)$ of $p$ is tangent to $D$, \\
(ii) For any point $p$, where $(D,p)$ is given by $h=0$, the germ $\delta(h)$ is contained in the ideal
$(h)$ of  $\mc{O}_{S,p}$. \\
The module of germs of logarithmic derivations (along $D$) at $p$ is denoted by 
\[  \Der_{S,p}(\log D)=\{  \delta: \delta \in \Der_{S,p} \text{ such that }\delta h \in (h)  \}. \]
These modules are the stalks at points $p$ of the sheaf $\Der_S(\log D)$ of $\mc{O}_{S}$-modules. 
Similarly we define logarithmic differential forms: a meromorphic $q$-form $\omega$ is logarithmic (along
$D$) at a point $p$ if $\omega h$ and $hd\omega$ are holomorphic in an
open neighbourhood around $p$. We denote
\[\Omega^q_{S,p}(\log D)= \{ \omega: \omega \text{ germ of a logarithmic $q$-form at $p$} \}.\] 
One can show that $\Der_{S,p}(\log D)$ and $\Omega^1_{S,p}(\log D)$ are reflexive $\mc{O}_{S,p}$-modules (see \cite{Saito80}). By a Theorem of Aleksandrov \cite{Aleksandrov90}, $(D,p)$ is free if and only
$\Der_{S,p}(\log D)$ resp. $\Omega_{S,p}^1(\log D)$ is a free $\calo_{S,p}$-module. The following theorem makes it possible to test whether $D$ is free (cf.~\cite[Thm.~1.8]{Saito80}): 

\begin{theorem}[Saito's criterion] \label{Thm:Saito}
Let $(S,D)$, $p$ and $h$ be as above. Then
$\Omega^1_{S,p}(\log D)$ is a free $\mc{O}_{S,p}$-module if and only if one has $\bigwedge^n \Omega^1_{S,p} (\log D)= \frac{1}{h}\Omega^n_{S,p}(\log D)$. This means that there exist $n$ elements
$\omega_i \in
\Omega^1_{S,p}(\log D)$ such that 
\[ \omega_1 \wedge \ldots \wedge \omega_n= u \frac{dx_1 \wedge \ldots \wedge dx_n}{h},\]
where $u$ is a unit in $\mc{O}_{S,p}$, i.e., $u \in \mc{O}_{S,p}^*$.
Then the $\omega_1, \ldots, \omega_n$ form an $\mc{O}_{S,p}$-basis for $\Omega^1_{S,p}(\log D)$ and one can write
$$\Omega^q_{S,p}(\log D)=\sum_{i_1 < \cdots < i_q} \mc{O}_{S,p} \ \omega_{i_1} \wedge \cdots \wedge \omega_{i_q},$$
for all $q=1, \ldots, n$. \\
A similar statement holds for $\Der_{S,p}(\log D)$.
\end{theorem}

In the following the so-called logarithmic residue will be used.  It is a tool to study the structure of the module
of logarithmic differential forms along $D$. It is tightly connected to the normalization of $D$. Locally, the residue of
$\Omega^1_{S,p}(\log D)$ is contained in the ring of meromorphic functions
$\mc{M}_{D,p}$ on $D$. In some way it measures how far away a logarithmic $q$-form is
from being holomorphic.\\
Historically, the study of residues of differential forms was initiated by A.~Cauchy in 1825: he considered residues of holomorphic functions in one variable.
Later, in 1887, H.~Poincar{\'e} introduced the notion of the residue of a
rational 2-form in $\C^2$. This was generalized by G.~de Rham and J.~Leray to
the class of $d$-closed meromorphic $q$-forms with poles of first order along a
smooth divisor. The modern algebraic treatment of residues in duality theory is
due to Leray and Grothendieck, see for example \cite{RD}.
We will study the  \emph{logarithmic} residue as
introduced by K.~Saito. More about the logarithmic residue can be found in \cite{AleksandrovTsikh01,Aleksandrov05,GrangerSchulze11}.  \\

 Let $S$ be an $n$-dimensional complex manifold and $D$ a divisor in $S$
given locally at a point $p \in S$ by a reduced equation $h \in \mc{O}_{S,p}$ and denote by $\pi: \widetilde D \ra D$ the normalization of $D$. Let $\mc{O}_{D}$ and $\mc{M}_{D}$ (resp.
$\mc{O}_{\widetilde D}$ and $\mc{M}_{\widetilde D}$) be the sheaves of germs of holomorphic and meromorphic functions on $D$ (resp. $\widetilde D$). Further denote by $\Omega^q_{D}$ (resp.
$\Omega^q_{\widetilde D}$) the sheaf of germs of holomorphic $q$-forms on $D$ (resp. $\widetilde D$). One has $\mc{O}_{D,p}=\mc{O}_{S,p}/ (h) \mc{O}_{S,p}$ and
$\Omega^q_{D,p}=\Omega^q_{S,p}/(h\Omega^q_{S,p}+dh\wedge\Omega^{q-1}_{S,p})$ and also $\mc{M}_{D}\otimes_{\mc{O}_{D}} \Omega^q_{D}=\pi_{*}(\mc{M}_{\widetilde D} \otimes_{\mc{O}_{\widetilde D}}
\Omega^q_{\widetilde D})$. In particular for $q=0$ we have $\pi_{*}(\mc{M}_{\widetilde D})=\mc{M}_{D}$ since $\pi$ is birational.

\begin{definition}
Let $(S,D)$, $p$ and $h$ be defined as usual. Let $\omega$ be any element in 
$\Omega^q_{S,p}(\log D)$. Then one can find a
presentation (see \cite[1.1]{Saito80}) 
\[g\omega=\frac{dh}{h}\wedge \xi + \eta, \]
with $g$ holomorphic and $\dim \mc{O}_{D,p}/(g) \mc{O}_{D,p} \leq n-2$, $\xi
\in \Omega^{q-1}_{S,p}$ and $\eta \in \Omega^q_{S,p}$. The \emph{residue
homomorphism} $\rho$ is defined as the $\mc{O}_{S,p}$-linear homomorphism of sheaves 
\begin{align*} \rho:\Omega^q_{S}(\log D)  & \lra  \mc{M}_{D}
\otimes_{\mc{O}_{D}}\Omega^{q-1}_{D} \\
  \omega & \longmapsto  \rho(\omega)=\frac{\xi}{g}.
\end{align*}
We often call $\rho(\Omega^1_{S,p}(\log D)) \subseteq \mc{M}_{D,p}$ the \emph{logarithmic residue} (of $D$ at $p$).
\end{definition}

One can show that the residue homomorphism $\rho$ is well defined, see \cite[2.4]{Saito80}. For an $\omega \in \Omega^q_{S,p}(\log D)$, $\rho(\omega)$ is $0$ on $D$ if and only if $\omega \in \Omega^q_{S,p}$. Moreover, $\rho(\Omega_{S}^q (\log D))$ is an $\mc{O}_{\widetilde D}$-coherent submodule of $\mc{M}_{\widetilde D} \otimes \Omega^{q-1}_{\widetilde D}$. In particular, the logarithmic residue $\rho(\Omega^1_{S}(\log D))$ contains $\pi_{*}\mc{O}_{\widetilde D}$, the ring of weakly holomorphic functions on $D$. \\

For the characterization of normal crossings we start with the following theorem \cite[Thm.~2.9]{Saito80}.

\begin{theorem}[Saito's Theorem] \label{Thm:closednc}
Let $(S,D)$ be a pair of a complex $n$-dimensional manifold and a divisor $D \subseteq S$. Suppose that locally at a point $p$
the divisor $D$ decomposes into irreducible components $(D,p)=(D_{1},p) \cup \ldots
\cup (D_{m},p)$. Let $h=h_{1} \cdots h_{m}$ be the corresponding decomposition of
the local equation of $D$, each irreducible factor $h_i$ corresponding to $D_i$. Then the following
conditions are equivalent: \\
(i) $\Omega^1_{S,p} (\log D)=\sum_{i=1}^m \mc{O}_{S,p}\frac{dh_{i}}{h_{i}} + \Omega_{S,p}^1$. \\
(ii) $\Omega^1_{S,p} (\log D)$ is generated by closed forms. \\
(iii) $\rho(\Omega^1_{S,p} (\log D))=\bigoplus_{i=1}^m \mc{O}_{D_{i},p}$. \\
(iv) (a) For each $i=1, \ldots, m$ the component $D_{i}$ is normal (i.e.,  \\
\phantom{(iv) (a)} 
$\dim \Sing D_{i} \leq  n-3$), \\
\phantom{(iv)} (b) $D_{i}$ intersects $D_{j}$ transversally for $i \neq j$ and $i,j = 1, \ldots, m$, \\
\phantom{(iv)} (c) $\dim(D_{i} \cap D_{j} \cap D_{k}) \leq n-3$ for all
$i,j,k$ distinct and 
$i,j,k =1, \ldots, m$.
\end{theorem}

\begin{example}  \label{Ex:tuellefrei}
Let $D$ be the divisor in $\C^3$ defined by $h=xz(x+z-y^2)$. This divisor is
called \emph{T\"ulle} and is studied in more detail in \cite{FaberHauser10}.
T\"ulle consists of three components, which are smooth, intersect pairwise
transversally and whose triple intersection is a point, see fig.~\ref{Fig:tuelle}. Thus it fulfills the
assumption (iv) of Theorem \ref{Thm:closednc}. The local ring $\mc{O}_{\Sing D,0}$ defining the singular locus $(\Sing D,0)$ is
not Cohen--Macaulay. Note that $D$ has  normal crossings outside the origin, where it is not even free. \\
\end{example} 

\begin{figure}[!h]
\begin{center}
\includegraphics[width=0.4 \textwidth]{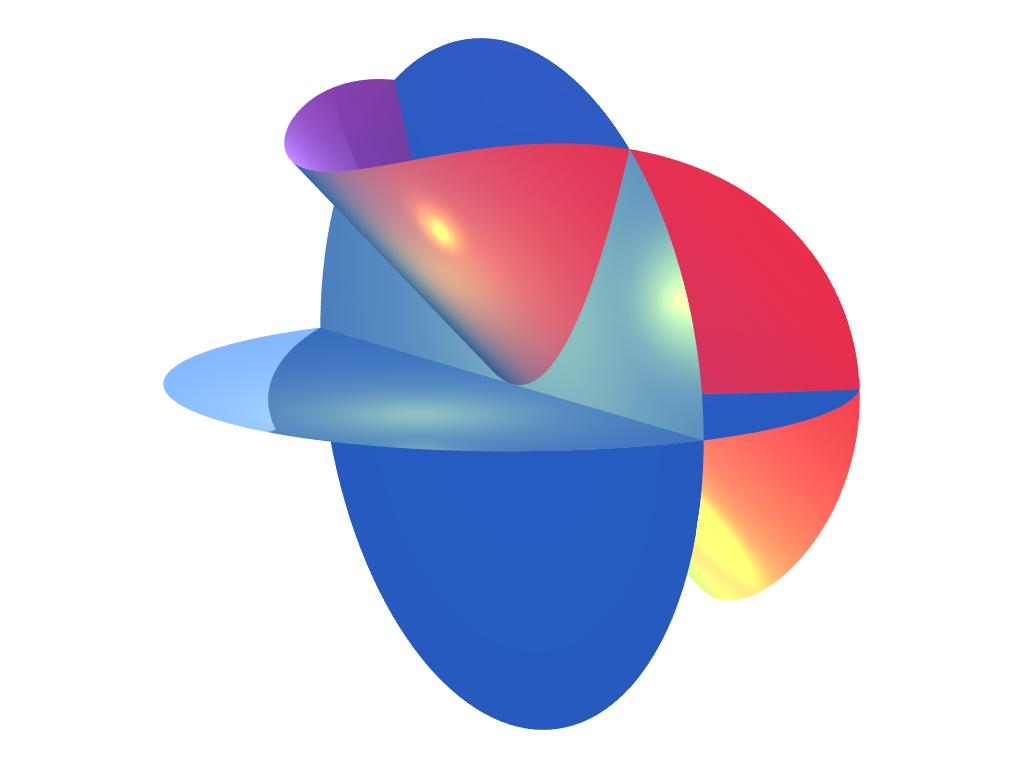}
\end{center}
\caption{ \label{Fig:tuelle} T\"ulle, defined by $h=xz(x+z-y^2)$, has normal crossings outside the origin but is not free at the origin.}
\end{figure}

\begin{lemma} \label{Lem:koordinatenwechselnc}
Denote by $(S,D)$ a complex manifold of dimension $n$ together with a divisor $D \subseteq S$, and let $(D,p)=\bigcup_{i=1}^m (D_i,p)$ be the decomposition of $D$ into irreducible components at a
point $p$ in $S$. Suppose that $h=h_1 \cdots h_m$ is the local equation of $D$ at $p$. Then  $D$ has normal crossings at $p$ if and only if the
$dh_i/h_i$ are part of a basis, whose elements are closed, of the form
$\omega_1=dh_1/h_1, \ldots, \omega_m=dh_m/h_m, \omega_{m+1}=df_{m+1}, \ldots, \omega_n=df_n$ of $\Omega^1_{S,p}(\log D)$, that is, 
$$\frac{dh_1}{h_1} \wedge \cdots \wedge \frac{dh_m}{h_m} \wedge df_{m+1} \wedge \cdots \wedge df_n=\frac{c}{h} \cdot dx_1 \wedge \cdots \wedge x_n,$$ 
 where the $f_{i}$ are some suitable elements in $\mc{O}_{S,p}$ and $c \in \mc{O}_{S,p}^*$.
\end{lemma}

\begin{proof} 
If $D$ has normal crossings at $p$ then one can find coordinates $x=(x_1, \ldots, x_n)$ such that $h=x_1 \cdots x_m$ is the defining equation of $D$ at $p$. Then clearly 
$$\frac{dx_1}{x_1}, \ldots, \frac{dx_m}{x_m}, dx_{m+1}, \ldots, dx_n$$ 
form a basis of $\Omega^1_{S,p}(\log D).$ 
Conversely, suppose that $\frac{dh_1}{h_1} \wedge \ldots \wedge \frac{dh_m}{h_m} \wedge df_{m+1} \wedge \cdots \wedge df_n=c/h \cdot dx_1 \wedge \ldots \wedge x_n$. This means that the Jacobian matrix
of
the $h_1, \ldots, h_m,$ $f_{m+1}, \ldots, f_n$ has determinant $c \in \mc{O}_{S,p}^*$. By the implicit function Theorem the $h_i$ and the $f_i$ are complex coordinates at $p$. Then, by definition $D$
has normal crossings at $p$.
\end{proof}

\begin{lemma}  \label{Lem:holomorphErz}
Let $D \subseteq S$ be a divisor in a complex manifold $S$ with $\dim S=n$. Suppose that $D$ is free at a point $p \in S$ and $\Omega^1_{S,p}(\log D)$ has a basis $\omega_1, \ldots, \omega_n$ such
that $\omega_1, \ldots, \omega_k$, $k <n$ are in $\Omega^1_{S,p}$. Then one can find a local isomorphism $(D,p) \cong (D',p') \times (\C^k,0)$, where $(D',p')$ is in $(\C^{n-k},p')$.
\end{lemma}

\begin{proof}
Since $\Omega^1_{S,p}(\log D)$ is free with basis $\omega_1, \ldots, \omega_n$, there is a unique basis $\delta_1, \ldots, \delta_n$ of $\Der_{S,p}(\log D)$ satisfying $\omega_i \cdot
\delta_j=\delta_{ij}$. Suppose that $1 \leq i \leq k$. For all coefficients of $\omega_i=\sum_{j=1}^n w_{ij}dx_j$ and $\delta_i=\sum_{j=1}^n d_{ij}
\D_{x_j}$ are holomorphic, one obtains the equation
$$ 1= \sum_{j=1}^n w_{ij} d_{ij}.$$
Since $\mc{O}_{S,p}$ is a local ring, at least one $w_{ij} d_{ij}$, w.l.o.g., for $j=1$, is invertible in $\mc{O}_{S,p}$, which implies  $d_{i1} \in \mc{O}^*_{S,p}$. Applying $\delta_i$ to $h$, the
local defining equation of $D$, yields $\D_{x_1}h \in (h, \D_{x_2}h, \ldots, \D_{x_n}h)$. The triviality Lemma \cite[3.5]{Saito80}  implies that $D$ is locally isomorphic to some $D' \times \C$. Applying this construction to the remaining
$\omega_i$, one arrives at $(D,p) \cong (D',p') \times (\C^k,0)$.
\end{proof}

\begin{lemma} \label{Lem:closed} 
Let $(D,p)=\bigcup_{i=1}^m (D_i,p)$ be  given by the reduced equation $h=h_1 \cdots
h_m$ and let $\omega \in \Omega^1_{S,p}(\log D)$ be a closed form. Then: \\
(i) The residue of $\omega$ along each branch $D_i$ is constant, that is,
$\rho(\omega)|_{D_i}=c_i $ with $c_i \in \C$ for $i=1, \ldots, m$. \\
(ii) $\omega$ can be represented as $\omega=\sum_{i=1}^m c_i dh_i /h_i+
\xi$, where $c_i \in \C$ and $\xi \in \Omega^1_{S,p}$ is closed. \\
(iii) If the residue of $\omega$ along at least one branch $D_i$ is non-zero,
then $\omega$ can be represented as
\[ \omega= \sum_{i=1}^m c_i \frac{dh_i'}{h_i'}, \quad c_i \in \C, \]
with $h_i'=u_ih_i$ and $u_i \in \mc{O}_{S,p}^*$. Note that $h_i'$ also defines
$D_i$ and that $h'=h_1' \cdots h_m'$ also defines
$D$ near $p$.
\end{lemma}

\begin{proof}
(i) is shown in the proof of \cite[Thm.~2.9]{Saito80}. \\
(ii):  Follows from the fact that $\rho(\omega)=0$ if and only if $\omega$ is holomorphic and the resulting exact sequence, see \cite[2.5]{Saito80}.\\
(iii): Suppose that $\omega=\sum_{i=1}^m c_i \frac{dh_i}{h_i} + \xi$, with $c_i \in
\C$, is a closed logarithmic form. Since we consider germs of differential forms, one can assume (Poincar\'e's Lemma) that $\xi=df$ for some $f \in \mc{O}_{S,p}$. 
Now assume that the residue $\rho(\omega)|_{D_1}$ is non-zero. Define $h_1':=h_1 \exp (f / c_1 )$. Then
$h_1'h_2 \cdots h_m$ also defines $D$ because multiplying with a unit does not
change the zero-set locally at $p$. The following holds:
\[ c_1 \frac{dh_1'}{h_1'} = c_1 \frac{dh_1}{h_1} + df=c_1
\frac{dh_1}{h_1} + \xi. \]
Hence we have $\omega= c_1 dh_1'/h_1' + \sum_{i=2}^m c_i dh_i/h_i$.
\end{proof}

\begin{lemma} \label{Lem:closedholomorphic} 
Let $(D,p)=\bigcup_{i=1}^m (D_i,p)$ be free at $p$ and let $\Omega^1_{S,p}(\log
D)$ have a basis $\omega_1, \ldots, \omega_n$ consisting of closed forms. Then
$m \leq n$ and maximally $n-m$ elements $\omega_i$ of this basis are holomorphic
forms. 
\end{lemma}

\begin{proof} 
From Lemma \ref{Lem:closed} it follows that each closed basis element
$\omega_i$ can be represented as $\omega_i=\sum_{j=1}^mc_{ij}
dh_j/h_j + df_i$ with $df_i \in \Omega^1_{S,p}$ and $c_{ij} \in \C$ for
$j=1, \ldots, m$. First suppose that $m > n$. By Saito's criterion one knows that $\bigwedge_{i=1}^n \omega_i= \frac{c}{h_1 \cdots h_m} \cdot dx_1 \wedge
\ldots \wedge dx_n$ with $c \in \mc{O}^*_{S,p}$. This means that the $n$-form
$\bigwedge_{i=1}^n \omega_i$ has a simple pole at $h_1 \cdots h_m$. But forming the
wedge product of the $\omega_i$ of the above form we obtain (by a simple computation)
 $\bigwedge_{i=1}^n \omega_i= \frac{g}{h_1 \cdots h_m} \cdot dx_1 \wedge \cdots
\wedge dx_n$ with $g \in (h_1, \ldots, h_m) \subseteq \mf{m}$. Thus $g$ is not
invertible, which is a  contradiction to Saito's criterion. \\
For the second assertion suppose that $\omega_i=df_i$, $f_i \in \mc{O}_{S,p}$
for $i=m, \ldots, n$ are holomorphic, that is, the basis contains $n-m+1$ closed holomorphic elements. An application of Lemma \ref{Lem:holomorphErz} yields an isomorphism $(D,p) \cong (D',0) \times
(\C^{n-m+1},0)$ with $(D',0) \subseteq (\C^{m-1},0)$. This means that $D'$ would be a free divisor with $m$ irreducible components and with a basis of closed forms in an $m-1$ dimensional manifold.
Contradiction to the first assertion of this lemma.
\end{proof}

\begin{proposition} \label{Prop:closednc}
Let $(D,p)=\bigcup_{i=1}^m (D_i,p)$ be free at $p$ and let
$\Omega^1_{S,p}(\log D)$ have a basis consisting of closed forms $\omega_1,
\ldots, \omega_n$. Then $m \leq n$ and $\omega_i$ can be chosen as $\omega_i=
dh_i'/h_i'$ where $h_i'=f_i h_i$ with $f_i \in \mc{O}_{S,p}^*$ for $i=1,
\ldots, m$ and $\omega_i=df_i$  with $f_i \in \mc{O}_{S,p}$ holomorphic for
$i=m+1, \ldots, n$. 
\end{proposition}

\begin{proof}
From Lemma \ref{Lem:closedholomorphic} it follows that $m \leq n$ and from Lemma
\ref{Lem:closed} that $(\omega_1, \ldots, \omega_n)$ can be
represented as 
\[ (\omega_1, \ldots, \omega_n)^T= \begin{pmatrix} C & 0 \\ 0 & I_{n-m} \end{pmatrix} \begin{pmatrix} \frac{dh}{\underline{h}} \\ 0 \end{pmatrix} + \begin{pmatrix} \underline{\xi} \\ \underline{df}
\end{pmatrix}\]  
with $C$ an $m \times m$-matrix with entries in $\C$,
$\frac{dh}{\underline{h}}=(\frac{dh_1}{h_1}, \ldots, \frac{dh_m}{h_m})^T$,
$\underline{\xi}=(\xi_1, \ldots, \xi_m)^T$ with $\xi_i \in \Omega^1_{S,p}$ and
$\underline{df}=(df_{m+1}, \ldots, df_n)^T$ with $f_i \in \mc{O}_{S,p}$.
Elementary linear algebra computations and an application of Lemma \ref{Lem:closedholomorphic} yield that $C \in GL_m (\C)$. Thus one can
assume that $(\omega_1, \ldots, \omega_m)$ is of the form $(\frac{dh_1}{h_1}+
\xi'_1, \ldots, \frac{dh_m}{h_m} + \xi'_m)$, where $\underline{\xi'}=M\underline{\xi}$ for some invertible matrix $M \in GL_m(\C)$. As in Lemma \ref{Lem:closed} write $\omega_i= dh_i'/h_i'$, where for $\xi'_i=df_i/f_i,
f_i \in \mc{O}_{S,p}^*$ one has $h_{i}'=f_i h_i$ for $i=1, \ldots, m$. The change of one $h_i$ does
not affect the others. The functions $h_i'$
also define the divisor $D$ at $p$. 
The assertion of the proposition follows.
\end{proof}

\begin{proposition} \label{Thm:ncequivclosedforms}
Denote by $(S,D)$ a complex manifold with $\dim S=n \geq 2$ together with a divisor $D \subseteq S$ and let $p \in S$ be a point. The following conditions are equivalent: \\
(i) $D$ has  normal crossings at $p$. \\
(ii) $\Omega^1_{S,p}(\log D)$ is free and has a basis of closed forms. \\
(iii) $\Der_{S,p}(\log D)$ is free and there exists a basis $\delta^1, \ldots, \delta^n$ of $\Der_{S,p}(\log D)$ such that $[ \delta^i, \delta^j]=0$ for all $i, j \in \{ 1, \ldots, n\}$.
\end{proposition}

\begin{proof}
(i) $\Rightarrow$ (ii) is a simple computation (cf. proof of Lemma  \ref{Lem:koordinatenwechselnc}). \\
Conversely, suppose that $\Omega^1_{S,p}(\log D)$ has a basis of closed forms.
By Proposition \ref{Prop:closednc} we can assume that $(D,p)=\bigcup_{i=1}^m (D_i,p)$ has $m \leq n$ irreducible components and that the closed basis of $\Omega_{S,p}(\log D)$ is
of the form $dh_{1}/h_{1}, \ldots, dh_{m}/h_{m}, df_{m+1},$ $
\ldots, df_{n}$, where $h_i$ is the reduced function corresponding to  the component $(D_i,p)$. By Lemma \ref{Lem:koordinatenwechselnc} the existence of a  closed basis of this form is
equivalent to $(D,p)$ having normal crossings. \\
It remains to show $(ii) \Leftrightarrow (iii)$: We have 
\begin{equation} \label{Equ:cartandiff}
 d\omega (\xi^1, \xi^2)=\xi^1 (\omega (\xi^2))- \xi^2 (\omega(\xi^1))- \omega([\xi^1, \xi^2]),
\end{equation}
where $\omega$ is a differential 1-form and $\xi^1, \xi^2$ are vector fields (for this well-known formula see e.g. \cite{Lang99}).
First, suppose that $[\delta^i, \delta^j]=0$ for all pairs $(i,j)$.  Plugging any basis elements $\delta^i, \delta^j$ into a basis element $\omega_k$ yields $d\omega_k (\delta^i, \delta^j)=\delta^i (\delta_{jk})-
\delta^j (\delta_{ik})- \omega (0) =0$. Hence any basis element $\omega_k$ is closed. 
Conversely, if each $\omega_k$ is closed, it follows from (\ref{Equ:cartandiff}) that $\omega_k ([\delta^i, \delta^j])=0$. Since $\Der_{S,p}(\log D)$ is closed under $[\cdot, \cdot ]$ and the
$\delta$'s form a basis of $\Der_{S,p}(\log D)$, the equation $[\delta^i, \delta^j]=\sum_{k=1} g_k \delta^k$ holds for some $g_k \in \mc{O}_{S,p}$. Using the $\mc{O}_{S,p}$-linearity of $\omega_k$ we
obtain $g_k=0$ for any $k=1, \ldots, n$.
Since this equality holds for any $i,j,k$ it follows that $[\delta^i, \delta^j]=0$ for all pairs $(i,j)$.
\end{proof}

\begin{Qu}
\begin{enumerate}
\item{ Construct special bases: we ask for a constructive algorithm for a closed basis of
$\Omega_{S,p}^1(\log D)$ (resp. a basis of commuting fields of $\Der_{S,p}(\log D)$), which in the first place determines if there exists such a basis.}
\item{Construct  a minimal system of generators of $\Omega^q_{S,p}(\log D)$, in particular in the case where $(D,p)$ is
not free.}
\end{enumerate}
\end{Qu}

\section{Normal crossings and (dual) logarithmic residue}

In this section we present a characterization of  normal crossing divisor $D$ by its
logarithmic residue, denoted by $\rho(\Omega^1_{S,p}(\log D))$. It follows from results about the dual logarithmic residue, which was introduced by Granger and Schulze in \cite{GrangerSchulze11}. They showed that
with the dual logarithmic residue a question by K.~Saito \cite{Saito80,SaitoLe84} can be answered, also see Thm.~\ref{Thm:Saitoanswer}. \\
In \cite[2.8]{Saito80} it is shown that the logarithmic residue of $\Omega^1_{S,p}(\log D)$ always contains the ring of weakly holomorphic functions on $D$. So it is quite
natural to ask when the two rings are the same. For free divisors the answer is surprisingly simple (under the additional condition that the normalization of $D$ is smooth):
$\rho(\Omega^1_{S,p}(\log
D))=\pi_*\mc{O}_{\widetilde D,p}$ if and only if $(D,p)$ has normal crossings.  In general the equality is
equivalent to saying that $(D,p)$ has normal crossings in codimension 1 (see Thm.~\ref{Thm:Saitoanswer}).
\\
This section is organized as follows: first we consider examples of divisors $(D,p)$ with weakly holomorphic logarithmic residue. 
Then some properties of divisors with weakly holomorphic residues are studied. Finally we introduce the dual logarithmic residue in order to prove the theorem. \\

 Suppose that $D$ is a free divisor whose logarithmic residue $\rho(\Omega^1_S(\log D))$ is equal to $\pi_*\mc{O}_{\widetilde D}$. Recall that $\pi_*\mc{O}_{\widetilde D}$ is equal to the
normalization $\widetilde{\mc{O}}_D$ and also to the ring of weakly holomorphic functions on $D$.  Since we consider free divisors, it is possible to compute
$\rho(\Omega^1_S(\log D))$ and $\pi_*\mc{O}_{\widetilde D}$ explicitly with a computer algebra system: from a basis of $\Omega^1_{S,p}(\log D)$ the logarithmic residue can be
computed, and it is also possible to compute the normalization of $D$. However, computing normalizations is of high complexity, so we are confined to low dimensional examples.

\begin{example}
 Let $D \subseteq S$ with $\dim S=n$ be smooth at a point $p$. Then locally at $p$ we can find coordinates
$(x_1, \ldots, x_n)$ such that $D=\{x_1=0\}$. Since $\Omega^1_{S,p}(\log D)$ is
generated by $\frac{dx_1}{x_1}, dx_2, \ldots, dx_n$, the residue of a
logarithmic form $\omega=a_1 \frac{dx_1}{x_1}+ \sum_{i=2}^n a_i dx_i$ is just $a_1|_D$ and hence $\rho(\Omega^1_{S,p}(\log D)=\mc{O}_{D,p}$, also cf.~Thm.~\ref{Thm:closednc}.
\end{example}

\begin{example}
Consider the cusp $D$ in $\C^2$, given by $h=x^3-y^2$ with coordinate ring $\mc{O}_{D,0}=\C\{x,y\} / (x^3-y^2)$.  It is well known that $\widetilde{\mc{O}}_D=\C\{t\}$ with
$t=\frac{y}{x}$. A basis of $\Omega^1_{\C^2,0}(\log D)$ is $\omega_1=\frac{dh}{h}$ and $\omega_2=\frac{1}{h}(3ydx+2xdy)$. 
Here $\rho(\omega_2)=\frac{x}{y}=t^{-1}$ is clearly not in $\C\{t\}$. Thus it follows that  $\Omega^1(\log D) \supsetneq
\pi_*\mc{O}_{\widetilde D}$. 
\end{example}

\begin{example} 
(The \emph{4-lines}) In this example, the divisor $D$ is free but does not have normal crossings
outside an $(n-3)$-dimensional subset of $D$. Let $D$ be the divisor in $\C^3$ given at $p=(x,y,z)$ by $h=(x+y)y(x+2y)(x+y+yz)$.
The divisor $D$ is free, thus one can
compute a basis of $\Omega^1_{\C^3,p}(\log D)$, namely
\small{
 \begin{align*} 
\omega_1& =\frac{dh}{h} \\
\omega_2 &=   \text{ \small $ \dfrac{1}{4h} (y(zx+9yz+7x+7y) dx -x(zx+9yz+7x+7y)dy  -(x+y))y(2y+x)
dz)$} \\
\omega_3 &=\frac{1}{4h} (y(x+y+yz)dx  -x(x+y+yz)dy) 
\end{align*} }\normalsize
This basis is the dual to the basis of $\Der_{\C^3,p}(\log D)$ given in example 6.2 of \cite{CN02} (in different coordinates). Here
$\pi_*\mc{O}_{\widetilde{D},p}\cong \widetilde{\mc{O}}_{D,p}$ is
isomorphic to 
\small{
$$ \C\{x,y,z\}/(x+y) \oplus \C\{x,y,z\}/(y) \oplus
\C\{x,y,z\}/(x+2y) \oplus \C\{x,y,z\}/(x+y+yz).$$}\normalsize
Since $\dim(\{h=\D_yh=0\})=1$, we have $\rho(\omega_i)=\frac{a_{i2}}{\D_yh}$, where
$\omega_i=\frac{1}{h}(a_{i1}dx+a_{i2}dy+a_{i3}dz)$ for $i=1,2,3$. For example the computation of $\rho(\omega_3)|_{D_1}=-\frac{1}{4x}$ 
shows that the residue of $\omega_3$ is not
holomorphic in $\pi_*\mc{O}_{\widetilde{D}_1,p}$. Hence the inclusion
$\pi_*\mc{O}_{\widetilde D,p} \supsetneq \rho(\Omega^1_{S,p}(\log D))$ is strict.
\end{example}

\begin{example}
Consider the Whitney Umbrella $D$ in $\C^3$ given by $h=x^2-y^2z$. The normalization $\widetilde D$ is smooth at the origin and has coordinate ring $\pi_*\mc{O}_{\widetilde
D,0}=\C\{x,y,z,t\}/(x^2-y^2z,yt-x,z-t^2)\cong \C\{y,t\}$.  One can show that $\Omega^1_{\C^3,0}(\log D)$ is generated by $dh/h, \omega=(yzdx-xzdy-1/2xydz)/h$ and $dx,dy,dz$. Since
$\rho(\omega)=yz/2x=t/2$ it follows that $\rho(\Omega^1_{\C^3,0}(\log D))$ is holomorphic on the normalization. Note that $D$ is not free.
\end{example}

These examples lead to the following 

 \begin{proposition} \label{Thm:irreduzibel}
Let $(S,D)$ be a manifold of complex dimension $n$ together with a divisor $D \subseteq S$. Suppose that $D$ is free at $p$, that
$$\rho(\Omega^1_{S_p}(\log D))=\pi_*\mc{O}_{\widetilde D,p}$$ 
and that $(\widetilde D, \pi^{-1}(p))$ is smooth. Then $D$ has normal crossings at $p$. 
\end{proposition}

First we consider some general properties of divisors with weakly holomorphic residue, in particular we show that if $D$ is a free divisor in a complex manifold $S$ of dimension $n$, having $n$
irreducible components $D_i$ at a point $p$ and satisfying $\rho(\Omega^1_{S,p}(\log D))=\pi_*\mc{O}_{\widetilde D,p}$, then $D$ has normal crossings at $p$ (Corollary to Lemma \ref{Lem:dhinbasis}).
Then we introduce the dual logarithmic residue and prove Prop.~\ref{Thm:irreduzibel} (following Granger and Schulze).    \\

\subsection{Divisors with weakly holomorphic logarithmic residue}

Here we first show an analogue of Theorem \ref{Thm:closednc} $(i) \Leftrightarrow (iii)$. Then some properties of $\pi_*\mc{O}_{\widetilde D,p}$ are considered (Cohen--Macaulayness). Finally we show
how to choose ``good'' generators for $\Omega^1_{S,p}(\log D)$ if $\rho(\Omega^1_{S,p}(\log D))=\pi_*\mc{O}_{\widetilde D,p}$ and that $D$ is Euler--homogeneous in this case (Lemma
\ref{Lem:dhinbasis}).

\begin{proposition} \label{Prop:resweaklygenerators}
Let $(S,D)$ be a divisor $D$ in a complex manifold $S$ of dimension $n$.  Then the following are equivalent: \\
(i) $\Omega^1_{S,p}(\log D)={}_{\mc{O}_{S,p}}\!\langle \omega_1, \ldots \omega_k \rangle + \Omega^1_{S,p}$, such that $\rho(\omega_1), \ldots, \rho(\omega_k) \in \pi_*\mc{O}_{\widetilde D,p}$ generate
$\pi_*\mc{O}_{\widetilde D,p}$ as $\mc{O}_{D,p}$-module. \\
(ii) $\rho(\Omega^1_{S,p}(\log D))=\pi_*\mc{O}_{\widetilde D,p}$.
\end{proposition}

\begin{proof}
The implication $(i) \Rightarrow (ii)$ is clear, since $\rho$ is a sheaf homomorphism and $\rho(\Omega^1_{S,p})=0$. Suppose now that $\rho(\Omega^1_{S,p}(\log D))=\pi_*\mc{O}_{\widetilde D,p}$. The
normalization is a finitely generated $\mc{O}_{D,p}$-module, i.e., $\pi_*\mc{O}_{\widetilde D,p}=\sum_{i=1}^k \mc{O}_{D,p}\alpha_i$ for some $\alpha_i \in \pi_*\mc{O}_{\widetilde D,p}$. The sequence
\begin{equation} \xymatrix@1{ 0\ar[r] & \Omega_{S,p}^1\ar[r]  &  \Omega_{S,p}^1(\log D) \ar[r]^-{\rho} & \pi_*\mc{O}_{\widetilde D,p} \ar[r] & 0} \label{Diag:resexakthol}
\end{equation} 
is exact (cf.~\cite{Saito80}). Thus there exist some $\omega_i \in \Omega^1_{S,p}(\log D)$ such that $\rho(\omega_i)=\alpha_i$ for each $i=1, \ldots, k$. Now take any $\omega \in
\Omega^1_{S,p}(\log D)$. Then $\rho(\omega)=\sum_{i=1}^k a_i \rho(\omega_i)$ for some $a_i \in \mc{O}_{D,p}$. Choose some representatives of the $a_i \in \mc{O}_{S,p}$ and define $\omega' :=
\sum_{i=1}^ka_i \omega_i$. Clearly $\omega' \in \Omega^1_{S,p}(\log D)$ as well as $\omega - \omega'$. But $\rho(\omega - \omega')=0$, so  $\omega - \omega'$ is holomorphic. This shows that any $\omega \in \Omega^1_{S,p}(\log D)$ can be written as an $\mc{O}_{S,p}$-linear combination of the $\omega_i$ and some holomorphic form.
\end{proof}

\begin{lemma}   \label{Lem:basisweaklyholom}
Let $(S,D)$ be a divisor $D$ in a complex manifold $S$ of dimension $n$. Suppose that at a point $p$ the divisor is free and and $\rho(\Omega^1_{S,p}(\log D))=\pi_*\mc{O}_{\widetilde D,p}$. \\
(i) The ring $\pi_*\mc{O}_{\widetilde D,p}$ is Cohen--Macaulay. \\
(ii) If $D$  additionally is not smooth and does not contain a smooth factor at $p$, i.e., is not locally isomorphic to some Cartesian product $(D',p') \times (\C^k,0)$ for some $0 < k <n$, one may
assume that $\pi_*\mc{O}_{\widetilde D,p}$ is minimally generated by $n$ elements $\alpha_i$, where $\alpha_1=1$ and $\alpha_i \in \pi_*\mc{O}_{\widetilde D,p} \backslash \mc{O}_{D,p}$ for $i \geq 2$.
\\

\end{lemma}

\begin{proof}
(i): Under our assumptions, the exact sequence (\ref{Diag:resexakthol}) yields a free resolution of  $\pi_*\mc{O}_{\widetilde D,p}$ (as $\mc{O}_{S,p}$-module). Since we are working over a regular local ring, it follows that
$\mathrm{projdim}_{\mc{O}_{S,p}}(\pi_*\mc{O}_{\widetilde D,p}) \leq 1$. With the Auslander--Buchsbaum formula follows $\depth(\mf{m}_{S},\pi_*\mc{O}_{\widetilde D,p}) \geq n-1$ (where $\mf{m}_S$
denotes the maximal ideal of $\mc{O}_{S,p}$). 
Since the depth is stable under local homomorphisms, 
 $\depth(\mf{m}_D,\pi_*\mc{O}_{\widetilde D,p})\geq n-1$. First suppose that $(D,p)$ is
irreducible, then $\pi_*\mc{O}_{\widetilde D,p}$ is a local ring. Since then $\mc{O}_{D,p} \subseteq \pi_*\mc{O}_{\widetilde D,p}$ is a finite ring extension it follows 
 that the depth of $\pi_*\mc{O}_{\widetilde D,p}$ as an $\pi_*\mc{O}_{\widetilde D,p}$-module is also greater than or equal to $n-1$. Clearly, $\dim(\pi_*\mc{O}_{\widetilde D,p})=n-1$ and so the assertion
follows from the height-depth inequality. \\
If $(D,p)=\bigcup_{i=1}^m (D_i,p)$, where $(D_i,p)$ denote the irreducible components, then $\pi_*\mc{O}_{\widetilde D,p}=\bigoplus_{i=1}^m \pi_*\mc{O}_{\widetilde D_i,p}$ is a semi-local ring with
$m$ maximal ideals $\mf{m}_{\widetilde D_i}$, $i=1, \ldots, m$. Then $\pi_*\mc{O}_{\widetilde D,p}$ is Cohen--Macaulay if  $(\pi_*\mc{O}_{\widetilde
D,p})_{\mf{m}_{\widetilde D_i}}\cong \pi_*\mc{O}_{\widetilde D_i,p}$ is Cohen--Macaulay for all $i=1, \ldots, m$. But this follows from the irreducible case since
$\depth(\mf{m}_{S},\pi_*\mc{O}_{\widetilde D,p}) = \depth(\mf{m}_{S},\pi_*\mc{O}_{\widetilde D_i,p})$ for all $i=1, \ldots, m$. 
\\
(ii):  follows from Lemmata \ref{Prop:resweaklygenerators} and \ref{Lem:holomorphErz} and an application of the Lemma of Nakayama.  
\end{proof}

\begin{lemma}  \label{Lem:invertierbarnormalisierung}
Let  $D \subseteq S$ be a divisor in a complex manifold $S$.  If $b$ is an element in
$\mc{O}_{D,p}$ that is invertible in $\pi_*\mc{O}_{\widetilde D,p}$ then $b$ is
already invertible in $\mc{O}_{D,p}.$
\end{lemma}

\begin{proof}
Easy computation.
\end{proof}

\begin{lemma} \label{Lem:dhinbasis}  \label{Lem:dhinbasisallg}
Let $D \subseteq S$ be a divisor in a complex manifold $S$ of dimension $n$. Suppose that $\rho(\Omega^1_{S,p}(\log D))=\pi_* \mc{O}_{\widetilde D,p}$. Then $\frac{dh}{h} \in \Omega^1_{S,p}(\log D)$
can be chosen as an element of a minimal system of generators of $\Omega^1_{S,p}(\log D)$. If $(D,p)=\bigcup_{i=1}^m (D_i,p)$,  defined by $h=h_1 \cdots
h_m$ in $\mc{O}_{S,p}$ then the
$\frac{dh_i}{h_i}$ form part of a minimal system of generators of
$\Omega^1_{S,p}(\log D)$.
\end{lemma}

\begin{proof}
Since $\Omega^1_{S}(\log D)$ is a coherent analytic sheaf, the stalk $\Omega^1_{S,p}(\log D)$ has a finite minimal system of
generators $\omega_1, \ldots, \omega_k$ with $k \geq n$. One can write
$$ \frac{dh}{h}= \sum_{i=1}^k a_i \omega_i,$$
for some $a_i \in \mc{O}_{S,p}$. Taking residues yields
\begin{equation}1_{\pi_* \mc{O}_{\widetilde D,p}}= \sum_{i=1}^k a_i |_D \rho(\omega_i). \label{Equ:res} \end{equation}
First assume that $D$ is irreducible at $p$. Then $\pi_* \mc{O}_{\widetilde D,p}$ is a local ring  and at least one $a_i|_D$ has to be invertible in $\pi_*
\mc{O}_{\widetilde D,p}$. By Lemma
\ref{Lem:invertierbarnormalisierung} this $a_i|_D$ is already invertible in $\mc{O}_{D,p}$. Thus $a_i(0) \neq 0$ and hence $a_i$ is contained in $\mc{O}_{S,p}^*$. This implies that $\frac{dh}{h}$ can
be chosen as an element of a minimal system of generators of $\Omega^1_{S,p}(\log D)$ instead of $\omega_i$. \\
If $(D,p)=\bigcup_{i=1}^m (D_i,p)$ is the decomposition into irreducible components, equation (\ref{Equ:res}) reads as follows: 
$$ 1_{\pi_*\mc{O}_{\widetilde{D}},p}= \sum_{i=1}^k a_i|_{D} \rho (\omega_i)=
\sum_{j=1}^m\left(\sum_{i=1}^k a_i|_{D_j} \rho(\omega_i)|_{D_j}\right).$$
Since the sum of the $\pi_* \mc{O}_{\widetilde D_j,p}$ is direct, 
$$  1_{\pi_*\mc{O}_{\widetilde{D}_1},p}= \sum_{i=1}^k a_i|_{D_1}
\rho(\omega_i)|_{D_1}.$$
Like in the irreducible case, it follows that  $a_i|_{D_1}$, wlog. for  $i=1$, has to
be invertible in $\pi_*\mc{O}_{\widetilde{D}_1,p}$.
Also, it follows that a representative of $a_1|_{D_1}$ in
$\mc{O}_{S,p}$, namely $a_1$, is
invertible in $\mc{O}_{S,p}$, so we may
exchange $\omega_1$ and $\frac{dh_1}{h_1}$. For the remaining $\frac{dh_i}{h_i}$ a
similar argument is used. Thus all $\frac{dh_i}{h_i}$ can be chosen as part of
a minimal system of generators. Clearly, also $\frac{dh}{h}, \frac{dh_2}{h_2}, \ldots, \frac{dh_m}{h_m}$ can be selected as part of any minimal system of generators. 
\end{proof}

\begin{remark}   \label{Rmk:dhinbasisallg}
Consider $D$ with the assumptions of Lemma \ref{Lem:dhinbasis} and further suppose that $D$ is  free. Then the element $\frac{dh}{h}$ can be chosen as an
element of a basis of $\Omega^1_{S,p}(\log D)$. Then $D$ is Euler-homogeneous: one can find a $\delta \in \Der_{S,p}(\log D)$ such that $\delta \cdot \frac{dh}{h}=1$. Hence $\delta(h)=h$. So we have shown that all free divisors $D$
with $\pi_*\mc{O}_{\widetilde D,p}=\rho(\Omega^1_{S,p}(\log D))$ are
Euler-homogeneous at $p$.  
\end{remark}

\begin{corollary} \label{Cor:ncomponentsweakly}
Let $D$ be a divisor in a complex manifold $S$ of dimension $n$ and suppose that at a point $p$, $D$ has $n$ irreducible components $(D_i,p)$. If $D$ has weakly holomorphic residue and is free at $p$,
then $D$ has normal crossings at $p$. 
\end{corollary}

\begin{proof}
Use Lemma \ref{Lem:dhinbasis} and Saito's criterion.
\end{proof}

\subsection{The dual logarithmic residue}  \label{Sub:dualres}

The dual logarithmic residue was introduced by Granger and Schulze in \cite{GrangerSchulze11}.  It relates the Jacobian ideal of a divisor with the conductor ideal of the normalization.
Here it will be used for the proof of Prop.~\ref{Thm:irreduzibel}. \\

Let $(S,D)$ be a complex manifold $S$ of dimension $n$ together with a divisor $D$ that is locally at a point $p \in S$ given by $\{h=0\}$. Denote by $\pi: \widetilde D \ra D$ the normalization of
$D$. Here we will abbreviate $\mc{O}_{S,p}$ to $\mc{O}_S$ etc. By definition there is an exact sequence (cf. (\ref{Diag:resexakthol}))
\begin{equation} \xymatrix@1{ 0\ar[r] & \Omega_{S}^1\ar[r] &  \Omega_{S}^1(\log D) \ar[r]^-{\rho} & \rho(\Omega^1_S(\log D)) \ar[r] & 0}.\label{Diag:res_exact}
\end{equation}
By applying the functor $\mathrm{Hom}_{\mc{O}_S}(-,\mc{O}_S)$ to (\ref{Diag:res_exact}) one obtains
\begin{equation} \text{  \footnotesize  $\xymatrix@1{ 0\ar[r] & \Der_S(\log D) \ar[r] &  \Der_{S} \ar[r]^-{\sigma}  &  \rho(\Omega^1_S(\log D))^{\vee} \ar[r]  & \mathrm{Ext}^1_{\mc{O}_{S}}(\Omega^1_{S}(\log D),\mc{O}_{S}) \ar[r] & 0  .}$ }\label{Diag:dualresidue}
\end{equation} \normalsize
Here $-^{\vee}$ denotes $\mathrm{Hom}_{\mc{O}_D}(-, \mc{O}_D)$. By Lemma 4.5 of \cite{DSSWW11} one has  
$$ \text{ \small{$\mathrm{Ext}^1_{\mc{O}_S}(\rho(\Omega^1_S(\log D)),\mc{O}_S)=\mathrm{Hom}_{\mc{O}_D}(\rho(\Omega^1_S(\log D)),\mc{O}_D)=\rho(\Omega^1_S(\log D))^{\vee},$}}$$ \normalsize which explains the
third term on the right in (\ref{Diag:dualresidue}). Then  $\rho(\Omega^1_S(\log D))^{\vee}$ is called the \emph{dual logarithmic residue} and we denote it shortly by $\ms{R}_D^{\vee}$. \\
One can show (see \cite{GrangerSchulze11}) that $\rho(\Omega^1_{S}(\log D))={\widetilde J_h}^{\vee}$, where $\widetilde J_h$ is the ideal generated by $(\D_{x_1}h, \ldots, \D_{x_n}h) \subseteq
\mc{O}_D$, that is, the Jacobian ideal of $D$. 

\begin{proposition} \label{Prop:resconductor}
Let $D \subseteq S$ be free. If the logarithmic residue is weakly holomorphic, that is, $\rho(\Omega^1_{S,p}(\log D))=\pi_*\mc{O}_{\widetilde D,p}$, then $\widetilde J_h \subseteq \mc{O}_D$ is equal
to
the conductor ideal $C_D$. Conversely, if $\widetilde D$ is Cohen--Macaulay at $p$ and $\widetilde J_h=C_D$, then 
$$\rho(\Omega^1_{S,p}(\log D))=\pi_*\mc{O}_{\widetilde D,p}.$$
\end{proposition}

\begin{proof} See \cite{GrangerSchulze11}. 
\end{proof}

\begin{lemma} \label{Lem:normalsmooth}
Let $D \subseteq S$ be a divisor in a complex manifold of dimension $n$. Suppose that $D$ is free at $p$, $\rho(\Omega^1_{S,p}(\log D))=\pi_*\mc{O}_{\widetilde D,p}$ and that
$(D,p)=\bigcup_{i=1}^m(D_i,p)$, where each irreducible component $D_i$ is normal. Then all $(D_i,p)$ are smooth and $(D,p)$ has normal crossings.
\end{lemma}

\begin{proof}
Since all irreducible components are normal, it follows that $\rho(\Omega^1_{S,p}(\log D))=\bigoplus_{i=1}^m \mc{O}_{D_i,p}$. By Theorem \ref{Thm:closednc} and Proposition \ref{Thm:ncequivclosedforms} $(D,p)$ is
a normal crossing singularity.
\end{proof}

\begin{proof}[Proof of Prop.~\ref{Thm:irreduzibel}] 
Using Prop.~\ref{Prop:resconductor} it follows (similar to Thm.~\ref{Thm:radikalJacobi}) from Piene's Theorem that $\Omega^1_{\widetilde D / D}=0$. 
By \cite[VI, Prop.~1.18, Prop.~1.20]{AltmanKleiman} (localization to an irreducible component $D_i$ and base change) it follows that $\Omega^1_{\widetilde D_i / D_i}=0$ for all $i=1, \ldots, m$. Then
using Lemma \ref{Lem:normalsmooth}, the remaining proof is similar to the one of Thm.~\ref{Thm:radikalJacobi}.
\end{proof}

Theorem \ref{Thm:closednc} suggests that $\rho( \Omega^1_{S}(\log D))$, the residue of
logarithmic 1-forms,  is directly
related to the geometry of the divisor $D$. K.~Saito has considered the relationship between the logarithmic residue and the local fundamental group of the complement of the divisor. He asked the following, cf.~\cite[(2.12)]{Saito80}:

\begin{Qu}[K. Saito] \label{Conj:Saito}
 Let $(S,D)$ be a manifold with $\dim S=n$ together with a divisor $D \subseteq S$ and let $p$ be a point on $D$. Are the following equivalent?
\\
(i) The local fundamental group $\pi_{1,q}(S \backslash D)$ for $q$ near $p$ is
abelian. \\
(ii) There exists an $(n-3)$-dimensional analytic subset $Z$ of $D$, such that
$D \backslash Z$ has only normal crossing singularities in a neighbourhood of
$p$. \\
(iii)  $\rho(\Omega^1_{S,p}(\log D))=\pi_*\mc{O}_{\widetilde D,p}$. 
\end{Qu}

The implications $(i) \Rightarrow (ii) \Rightarrow (iii)$ were proven in
\cite{Saito80}.  In 1985  L\^e and Saito \cite{SaitoLe84} gave a topological proof of the
equivalence of $(i)$ and $(ii)$. The implication $(iii) \Rightarrow (ii)$ was only recently proven by Granger and Schulze \cite{GrangerSchulze11}. Below is a proof using our Proposition
\ref{Thm:irreduzibel}. Hence all three conditions are equivalent. There
seems to be no obvious link between the residue and the fundamental
group, and nobody seems to have studied how to prove directly that $(i)$ is equivalent
to $(iii)$.

\begin{theorem}[Granger--Schulze] \label{Thm:Saitoanswer}
Let $(S,D)$ be a complex manifold together with a divisor $D \subseteq S$. If the logarithmic residue $\rho(\Omega^1_S(\log D))=\pi_*\mc{O}_{\widetilde D}$, then $D$ has normal crossings in
codimension 1.
\end{theorem}

\begin{proof}
By a Theorem of Scheja \cite{Scheja64} (also see \cite{SiuTrautmann71}), applied to $\mc{O}_{\Sing D}$, the divisor $D$ is free outside an analytic subset $Z \subseteq S$ of codimension at least $2$
in $D$. Since $\rho(\Omega^1_{S,p}(\log D))=\pi_*\mc{O}_{\widetilde D,p}$ for all $p
\in S$  and  $\widetilde D$ is by definition smooth in codimension 1 it follows from Prop.~\ref{Thm:irreduzibel} that $D$ has normal crossings outside an analytic set of codimension 2 in
$D$.
\end{proof}

Similar to conjecture \ref{Conj:gorensteinnormal}  one asks 

\begin{Qu}
Is a free divisor that has normal crossings in codimension 1 already a normal crossing divisor?
\end{Qu}

\subsection*{Acknowledgements}
 I thank my advisor Herwig Hauser for introducing me to this topic and for many comments and suggestions. I also thank David Mond, Luis Narv\'aez and Mathias Schulze for
several discussions, suggestions and comments. Moreover, ideas for this work originated from discussions with Alexander G.~Aleksandrov, Michel Granger, Jan Schepers, Bernard Teissier and Orlando
Villamayor, whose help shall be acknowledged.

\bibliographystyle{alpha}      
\bibliography{biblioNC}   

\end{document}